\documentclass[%
 aip,
 amsmath,amssymb,
 reprint,%
]{revtex4-1}

\usepackage{graphicx}
\usepackage{dcolumn}
\usepackage{bm}

\usepackage{hyperref}
\usepackage{url}
\usepackage{etoolbox}

\usepackage[utf8]{inputenc}
\usepackage[T1]{fontenc}
\usepackage{mathptmx}

\usepackage[labelformat=simple
]{subcaption}
\usepackage{lipsum}
\usepackage{amsthm}
\usepackage{filecontents}
\usepackage{booktabs}

\captionsetup{justification   = raggedright,
              singlelinecheck = false}

\newtheorem{theorem}{Theorem}
\newtheorem{lemma}[theorem]{Lemma}

\begin{document}

\preprint{AIP/123-QED}

\title[Entanglement-Embedded Recurrent Network Architecture]{Entanglement-Embedded Recurrent Network Architecture: \\ Tensorized Latent State Propagation and Chaos Forecasting}

\author{Xiangyi Meng}
 \email{xm@bu.edu.}
\affiliation{Center for Polymer Studies and Department of Physics, Boston University, USA}%

\author{Tong Yang}
 \email{yangto@bc.edu.}
\affiliation{Department of Physics, Boston College, USA}%

\date{\today}

\begin{abstract}
	Chaotic time series forecasting has been far less understood despite its tremendous potential in theory and real-world applications. Traditional statistical/ML methods are inefficient to capture chaos in nonlinear dynamical systems, especially when the time difference $\Delta t$ between consecutive steps is so large that a trivial, ergodic local minimum would most likely be reached instead. Here, we introduce a new long-short-term-memory (LSTM)-based recurrent architecture by tensorizing the cell-state-to-state propagation therein, keeping the long-term memory feature of LSTM while simultaneously enhancing the learning of short-term nonlinear complexity. We stress that the global minima of chaos can be most efficiently reached by tensorization where all nonlinear terms, up to some polynomial order, are treated explicitly and weighted equally. The efficiency and generality of our architecture are systematically tested and confirmed by theoretical analysis and experimental results. In our design, we have explicitly used two different many-body entanglement structures---matrix product states (MPS) and the multiscale entanglement renormalization ansatz (MERA)---as physics-inspired tensor decomposition techniques, from which we find that MERA generally performs better than MPS, hence conjecturing that the learnability of chaos is determined not only by the number of free parameters but also the tensor complexity---recognized as how entanglement entropy scales with varying matricization of the tensor.
\end{abstract}

\maketitle


\section{Introduction}

	Time series forecasting~\cite{forecast_msa18}, despite its undoubtedly tremendous potential in both theoretical issues (e.g., mechanical analysis, ergodicity) and real-world applications (e.g., traffic, weather), has long been known as an intricate field. From classical work of statistics such as auto-regressive moving average (ARMA) families~\cite{time-ser-anal_bjrl15} and basic hidden Markov models (HMM)~\cite{hidden-markov-model_rj86} to contemporary machine-learning (ML) methods~\cite{ml-time-ser-comp_aages10} 
	such as gradient boosted trees (GBT) and neural networks (NN), the essential complexity inhabiting in time series has been more and more recognized, the forecasting models having extended themselves from linear, Markovian assumptions to nonlinear, non-Markovian, and even more general situations~\cite{dyn-complex-syst_bmc98}. Among all known methods, recurrent NN architectures~\cite{lstm-empir_jzs15}, including plain recurrent neural networks (RNN)~\cite{rnn-mult_gsclc90} and long short-term memory (LSTM)~\cite{lstm_hs97}, are the most capable of capturing the complexity, as they admit the fundamental recurrent behavior of time series data. LSTM has proved useful on speech recognition and video analysis tasks~\cite{deep-learn_lbh15} where maintaining long-term memory is the essential complexity. To this objective, novel architectures such as higher-order RNN/LSTM (HO-RNN/LSTM)~\cite{ho-rnn_sj16} have been introduced to capture long-term non-Markovianity explicitly, further improving performance and leading to more accurate theoretical analysis. 
	
	Still, another dominance of complexity---chaos---has been far less understood. Even though enormous theory/data-driven studies on forecasting chaotic time series by recurrent NN have been conducted~\cite{rnn-chaos_kpv92,rnn-chaos_zx00,rnn-chaos_hxxy04,rnn-chaos_ll16,rnn-chaos_vbwsk18}, there is still no clear guidance of which features play the most general role in chaos forecasting methods. The notorious indication of chaos,
	\begin{eqnarray}
	\label{eq_lyapunov-exp}
	\left|\delta x_t\right|\approx e^{\lambda t}\left|\delta x_0\right|,
	\end{eqnarray}
	(where $\lambda$ denotes the spectrum of Lyapunov exponents) suggests that the difficulty of chaos forecasting is two-fold: first, any small error will propagate exponentially, and thus multi-step-ahead predictions will be exponentially worse than one-step-ahead ones; second, and more subtly, when the actual time difference $\Delta t$ between consecutive steps increases, the minimum redundancy needed for smoothly descending to the global minima during NN training also increases exponentially. Most studies on chaos forecasting only address the first difficulty by improving prediction accuracy at the global minima, yet the latter is indeed more crucial, especially when $\Delta t$ is so large that a trivial, ergodic local minimum would most likely be reached instead. 
	Recently, \emph{tensorization} has been introduced in recurrent NN architectures~\cite{tensor-rnn-video_ykt17,tensor-rnn-lang_ss18}. A tensorized version of HO-RNN/LSTM, namely HOT-RNN/LSTM~\cite{hot-rnn_yzay19}, has claimed its advantage in learning long-term nonlinearity on Lorenz systems of small $\Delta t$. On the one hand, we believe that the global minima of chaos (where dominance of linear dependence is absent) can be most efficiently reached by tensorization approaches, where all nonlinear terms, up to some polynomial order, are treated explicitly and weighted equally. On the other hand, for simple chaotic dynamical systems, nonlinear complexity is only encoded in the short term, not long term, which HO/HOT models will not be very useful in capturing when $\Delta t$ is large. Hence, a new tensorization-based recurrent NN architecture is desired so as to push our understanding of chaos in time series and meet practical needs, e.g., modeling laminar flame fronts and chemical oscillations~\cite{turbul-predict_r18,turbul-predict_phglo18,turbul-predict_jl19,turebul-predict_qm20}.
	
	In this paper, we introduce a new LSTM-based architecture by tensorizing the cell-state-to-state propagation therein, keeping the long-term memory feature of LSTM while simultaneously enhancing learning of short-term nonlinear complexity. Compared with traditional LSTM architectures including stacked LSTM~\cite{stack-lstm_g14} and other aforementioned statsitics/ML-based forecasting methods, our model is shown to be a general and the best approach for capturing chaos in almost every typical chaotic continuous-time dynamical system and discrete-time map with controlled comparable NN training conditions, justified by both our theoretical analysis and experimental results. 
	Our model has also been tested on real-world time series datasets where the improvements range up to $6.3\%$.

	During the tensorization, we have explicitly \emph{embedded} many-body quantum state structures---a way of reducing the exponentially large degree of freedom of a tensor (a.k.a. tensor decomposition)---from condensed matter physics, which is not uncommon for NN design~\cite{ml-phys_cccdstvmz19}. Since a many-body entangled state living in a tensor-product Hilbert space is hardly separable, this similarity motivates us to adopt a special measure of tensor complexity, namely, the entanglement entropy (EE)~\cite{area-law_ecp10}, which is commonly used in quantum physics and quantum information~\cite{ml-phys_cccdstvmz19}. For one-dimensional many-body states, two thoroughly studied, popular but different structures exist: multiscale entanglement renormalization ansatz (MERA)~\cite{mera_v08} or matrix product states (MPS)~\cite{mps_vc06}, of which the EE scales with the subsystem size or not at all, respectively~\cite{area-law_ecp10}. For most pertinent studies, 
	MPS has been proved efficient enough to be applicable to a variety of tasks~\cite{area-law-image_z17,express-power-rnn_kno18,mps-classif_bskj19,area-law-q-nn_jwwgg19}. However, our experiments show that, regarding our entanglement-embedded design of the new tensorized LSTM architecture, LSTM-MERA performs even better than LSTM-MPS in general without increasing the number of parameters. Our finding leads to another interesting result: we conjecture that not only should tensorization be introduced, but the tensor's EE has to scale with the system size as well---and hence MERA is more efficient than MPS at learning chaos.
	
	
	\begin{figure*}[t]
		\begin{minipage}[t]{59mm}
			\raggedleft
			\includegraphics[height=80mm]{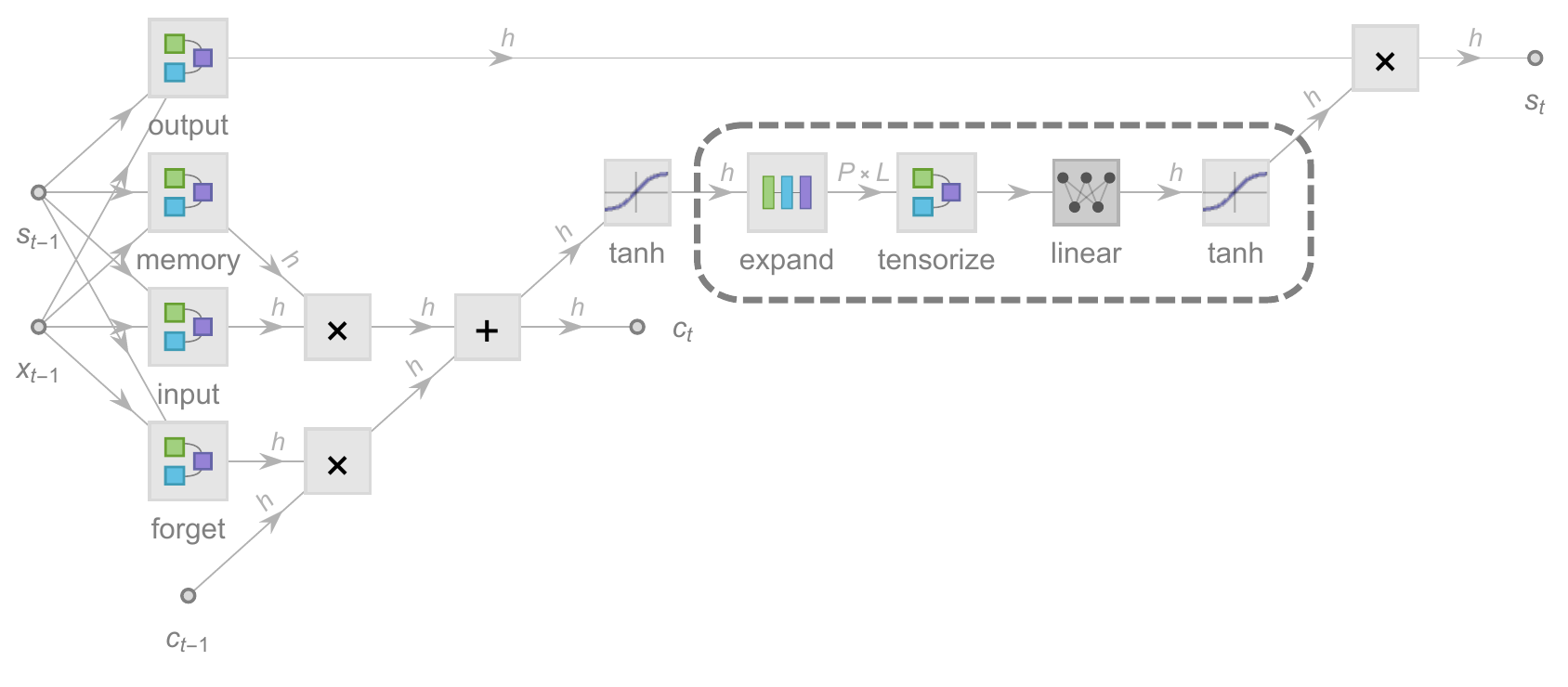}
		\end{minipage}
		\begin{minipage}[b]{119mm}
			\caption{\label{fig_netw}Architecture of a long short-term memory (LSTM) unit in the most common form of four gates: input, memory, forget, and output, enhanced by tensorized state propagation with four additional layers embedded (dashed rectangle): \emph{expand} [Eq.~(\ref{eq_tensor-operator})], \emph{tensorize} (Fig.~\ref{fig_tensorization}), \emph{linear}, and a \emph{tanh} activation function. $d$ is the input dimension of $x_t$, and $h$ is the hidden dimension of state $s_t$ and cell state $c_t$. An $h$-dimensional vector $\tanh c_t$ is first expanded into a $P\times L$-dimensional matrix where $L$ and $P$ are dubbed as the physical length and physical degrees of freedom (DOF), respectively. Then the matrix is tensorized into a $L$-rank tensor of dimension $P^L$ and passed forward. The effectiveness of this architecture is investigated in Section~\ref{sec_alternative}.}
		\end{minipage}
	\end{figure*}
	
	\section{Tensorized State Propagation}
	\subsection{Formalism}
	
	The formalism starts from an operator-theoretical perspective by defining two real \emph{operators} $\mathcal{W}$ and $\sigma$, from which most RNN architectures can be represented. $\mathcal{W}:\mathbb{X}\to\mathbb{G}$ is a linear operator, and $\sigma:\mathbb{G}\to\mathbb{G}$ is a nonlinear operator that $\sigma(G)=(\sigma\circ G)\in\mathbb{G}$ given $G\in\mathbb{G}$. Here $\oplus$ stands for tensor direct sum and $\circ$ for entry-wise operator product. All double-struck symbols ($\mathbb{X},\mathbb{G},\cdots$) used in the context are real vector spaces considered of \emph{covariant} type, as $\mathcal{W}$ can be interpreted as a linear-map-induced 2-\emph{contravariant} bilinear form. Next, a \emph{state propagation function (gate)} $g(x,y,\cdots;\mathcal{W})=\sigma(\mathcal{W}(x\oplus y\oplus\cdots))$ is introduced, where $X_i\in\mathbb{X}_i$. 
	Following the formalism, an LSTM is of the form 
	\begin{eqnarray}
	\label{eq_lstm-operator}
	s_t&=& g(1,x_{t-1},s_{t-1};\mathcal{W}_o)\circ \sigma(c_t),\qquad x_t=g(1,s_t;\mathcal{W}_x),\nonumber\\
	c_t&=& g(1,x_{t-1},s_{t-1};\mathcal{W}_f)\circ c_{t-1}\nonumber\\
	&+& g(1,x_{t-1},s_{t-1};\mathcal{W}_i)\circ g(1,x_{t-1},s_{t-1};\mathcal{W}_m),
	\end{eqnarray}
	where the four gates controlled by $\mathcal{W}_i$, $\mathcal{W}_m$, $\mathcal{W}_f$, and $\mathcal{W}_o$ are the input, memory, forget, and output gates in LSTM. Next, a realization of an LSTM is given by letting state $s_t$ and cell state $c_t$ be $h$-dimensional \emph{covectors} and input $x_t$ be a $d$-dimensional covector (Fig.~\ref{fig_netw}). Therefore $\mathcal{W}_i$ (also $\mathcal{W}_m$, $\mathcal{W}_f$, and $\mathcal{W}_o$) has a direct-sum contravariant realization $W_i\in\mathbf{M}(h,1)\oplus\mathbf{M}(h,d)\oplus\mathbf{M}(h,h)$ and contains $h(1+d+h)$ free real parameters at maximum. During NN training, only the free parameters of each linear operator $W$ are learnable, while all $\sigma$ (i.e., activation functions) are fixed to be {tanh}, {sigmoid} or other nonlinear functions. $c_t$ is known to suffer less from the vanishing gradient problem and thus captures long-term memory better, while $s_t$ tends to capture short-term dependence.
	
	Our tensorized LSTM architecture (Fig.~\ref{fig_netw}) is exactly based on Eq.~(\ref{eq_lstm-operator}), from which the only change is
	\begin{equation}
	\label{eq_tensor-lstm-operator}
	s_t= g(1,x_{t-1},s_{t-1};\mathcal{W}_o)\circ g(\mathcal{T}(\sigma(c_t));\mathcal{W}_{\mathcal{T}}).
	\end{equation}
	$g(\mathcal{T}(\sigma(c_t));\mathcal{W}_{\mathcal{T}})$ is called a \emph{tensorized state propagation function} as $\mathcal{W}_{\mathcal{T}}:\mathbb{T}\to\mathbb{G}$ acts on a covariant tensor,
	\begin{equation}
	\label{eq_tensor-operator}
	\mathcal{T}(\sigma(c_t))=\bigotimes_{l}\,\left(1\oplus q_{t,l}\right)=\bigotimes_{l}\,\left(1\oplus{\mathcal{W}_l}(\sigma(c_t))\right).
	\end{equation}
	Each $\mathcal{W}_l$ in Eq.~(\ref{eq_tensor-operator}) maps from the cell state $c_t$ to a new covector $q_{t,l}\in\mathbb{Q}$. Here, $\mathbb{Q}$ is named a \emph{local q-space}, as by way of analogy it is considered encoding the \emph{local} degree of freedom in quantum mechanics. $\mathbb{Q}$ can be extended to the complex number field if necessary. Mathematically, Eq.~(\ref{eq_tensor-operator}) offers the possibility of directly constructing orthogonal polynomials up to order $L$ from $\sigma(c_t)$ to build up nonlinear complexity. In fact, when $L$ goes to infinity,  $\mathbb{T}=\lim\limits_{L\to\infty}(1\oplus \mathbb{Q})^{\otimes L}=1\oplus \mathbb{Q} \oplus \mathbb{Q} \otimes \mathbb{Q}\oplus \cdots$ becomes a tensor algebra (up to a multiplicative coefficient), and $\mathcal{T}(\sigma(c_t))$ admits any nonlinear smooth function of $c_t$.
	
	Following the same procedure above, Eq.~(\ref{eq_tensor-operator}) is realized by choosing $L$ independent realizations, $W_l\in\mathbf{M}(P-1,h)$, $l=1,2,\cdots,L$, which in total contain $L(P-1)h$ learnable parameters at maximum,
	\begin{equation*}
	\label{eq_expand}
	\left(\begin{matrix}
	\left[\tanh c_t\right]_1\\\left[\tanh c_t\right]_2\\\vdots\\\left[\tanh c_t\right]_h
	\end{matrix}\right)
	\xrightarrow{\begin{matrix}
		\text{linear +}\\\text{padding}
		\end{matrix}}
	\left(\begin{matrix}
	\left(\begin{matrix}
	1\\\left[q_t\right]_{21}\\\left[q_t\right]_{31}\\\vdots\\\left[q_t\right]_{P1}
	\end{matrix}\right)
	&
	\left(\begin{matrix}
	1\\\left[q_t\right]_{22}\\\left[q_t\right]_{32}\\\vdots\\\left[q_t\right]_{P2}
	\end{matrix}\right)
	&
	\begin{matrix}
	\cdots\\\cdots\\\cdots\\\ddots\\\cdots
	\end{matrix}
	&
	\left(\begin{matrix}
	1\\\left[q_t\right]_{2L}\\\left[q_t\right]_{3L}\\\vdots\\\left[q_t\right]_{PL}
	\end{matrix}\right)
	\end{matrix}\right)
	,\quad
	\end{equation*}
	while letting $\mathcal{T}(\sigma(c_t))\equiv\mathcal{T}(\tanh c_t)$. From the realization of Eq.~(\ref{eq_tensor-lstm-operator}), $W_\mathcal{T}\in\mathbf{M}(h,P^L)$~[Fig.~\ref{fig_tensor}], however, a problem of exponential explosion (a.k.a. ``curse of dimensionality'') arises. Treating $W_\mathcal{T}$ maximally by training all $hP^L$ learnable parameters is very computationally expensive, especially as $L$ cannot be small because it governs the nonlinear complexity. To overcome this ``curse of dimensionality'', \emph{tensor decomposition} techniques have to be exploited~\cite{express-power-rnn_kno18} for the purpose of finding a much smaller subset $\mathbf{T}\subset\mathbf{M}(h,P^L)$ to which all possible $W_\mathcal{T}$ belong without sacrificing too much expressive power.

	\subsection{Many-Body Entanglement Structures}
	Below, we introduce in details the two many-body quantum state structures (MPS and MERA) as efficient low-order representation of $W_\mathcal{T}$. A comparison of these two entanglement structures is delivered after introducing an important measure of tensor complexity: the scaling behavior of EE.
	
	\subsubsection{MPS}
	As one of the most commonly used tensor decomposition techniques, MPS is also widely known as the tensor-train decomposition~\cite{tensor-train-spectr_bekm16} and takes the following form~[Fig.~\ref{fig_mps}]
	\begin{equation*}
	[W_\mathcal{T}]^h_{\mu_1\cdots\mu_L}=\sum_{\{\alpha\}}^{D_{\text{II}}}[w_0]_{\alpha_1\alpha_{L+1}}^{h}\left([w^\dagger_1]_{\mu_1}^{\alpha_1\alpha_2}[w^\dagger_2]_{\mu_2}^{\alpha_2\alpha_3}\cdots[w^\dagger_L]_{\mu_L}^{\alpha_L\alpha_{L+1}}\right)
	\end{equation*}
	in our model, where $w^\dagger_1,w^\dagger_2,\cdots,w^\dagger_L$ are learnable 3-tensors (where $^\dagger$ denotes symbolically that they are \emph{inverse isometries}~\cite{area-law_ecp10}). $D_{\text{II}}$ is an artificial dimension (the same for all $\alpha$). $w_0$ is no more than a linear transformation that collects the boundary terms and keeps symmetry. The above notations are used for consistency with quantum theory~\cite{area-law_ecp10} and the following MERA representation.
	
	\subsubsection{MERA}
	
	The best way to explain MERA is by graphical tools, e.g., tensor networks~\cite{area-law_ecp10}. MERA differs from MPS in its multi-level tree structure: each level $\{\text{I},\text{II},\cdots\}$ contains a layer of 4-tensor \emph{disentanglers} of dimensions $\{D_\text{I}^4,D_\text{II}^4,\cdots\}$ and then a layer of 3-tensor \emph{isometries} of dimensions $\{D_\text{I}^2\times D_\text{II},D_\text{II}^2\times D_\text{III},\cdots\}$, of which details can be found in Ref.~\cite{mera_v08}. MERA is similar to the Tucker decomposition~\cite{hierarch-tucker_g10} but fundamentally different because of the existence of disentanglers which smears the inhomogeneity of different tensor entries~\cite{mera_v08}.
	
	Figure~\ref{fig_mera} shows a reorganized version of MERA used in our model where the storage of independent tensors is maximally compressed before they being multiplied with each other by tensor products, which allows more GPU acceleration during NN training.
	
	\subsubsection{Scaling behavior of EE} 
	Given an arbitrary tensor $W_{\mu_1\cdots\mu_L}$ of dimension $P^L$ and a cut $l$ so that $1\le l\ll L$, the EE is defined in terms of the $\alpha$-R\'enyi entropy~\cite{area-law_ecp10},
	\begin{equation}
	\label{eq_ee}
	S_{\alpha}(l)\equiv S_{\alpha}(W(l))=\frac{1}{1-\alpha}\log\frac{\sum_{i=1}^{P^{l}}\sigma_i^{\alpha}(W(l))}{\left(\sum_{i=1}^{P^{l}}\sigma_i(W(l))\right)^{\alpha}},
	\end{equation}
	assuming $\alpha\ge1$. The Shannon entropy is recovered under $\alpha\to1$. 
	$\sigma_i(W(l))$ in Eq.~(\ref{eq_ee}) is the $i$-th singular value of matrix $W(l)=W_{(\mu_1\times\cdots\times\mu_{l}),(\mu_{l+1}\times\cdots\times\mu_L)}$, matricized from $W_{\mu_1\cdots\mu_L}$. How $S(l)$ scales with $l$ determines how much redundancy exists in $W_{\mu_1\cdots\mu_L}$, which in turn tells how efficient a tensor decomposition technique can be. For one-dimensional gapped low-energy quantum states, their EE saturates even as $l$ increases, i.e., $S_{\alpha}(l)=\Theta(1)$. Thus their low-entanglement characteristics can be efficiently represented by MPS, of which the EE does not scale with $l$ either and is bounded by $S_{\alpha}(l)\le S_1(l)\le2\log D_\text{II}$~\cite{area-law_ecp10}. By contrast, a non-trivial scaling behavior $S_{\alpha}(l)=\Theta(\log l)$ corresponds to gapless low-energy states and can only be efficiently represented by MERA, of which $S_{\alpha}(l)\le S_1(l)\le C+\sum_{\text{level}=1}^{\log_2 l}\log D_\text{level}\approx C+C' \log l$ scales logarithmically~\cite{mera_v08}. Both bounds of MPS and MERA are also proved to be tight.
	
	The different EE scaling behaviors between MERA and MPS have hence provided an apparent geometric advantage of MERA, i.e., its two-dimensional structure~[Fig.~\ref{fig_mera}], enlarging which will increase not only the width but also the \emph{depth} of NN as the number of applicable levels scale logarithmically with $L$, offering even more power of model generalization on the already-inherited depth of LSTM architecture~\cite{lstm-empir_jzs15}. Such an advantage is further confirmed by Eq.~(\ref{eq_ee_err}) and then in Section~\ref{sec_same-complex} where tested are tensorized LSTMs with the two different representations, LSTM-MPS and LSTM-MERA.	
	
	\begin{figure*}[t]
		\begin{minipage}[b]{58mm}
			\centering
			\includegraphics[width=58mm]{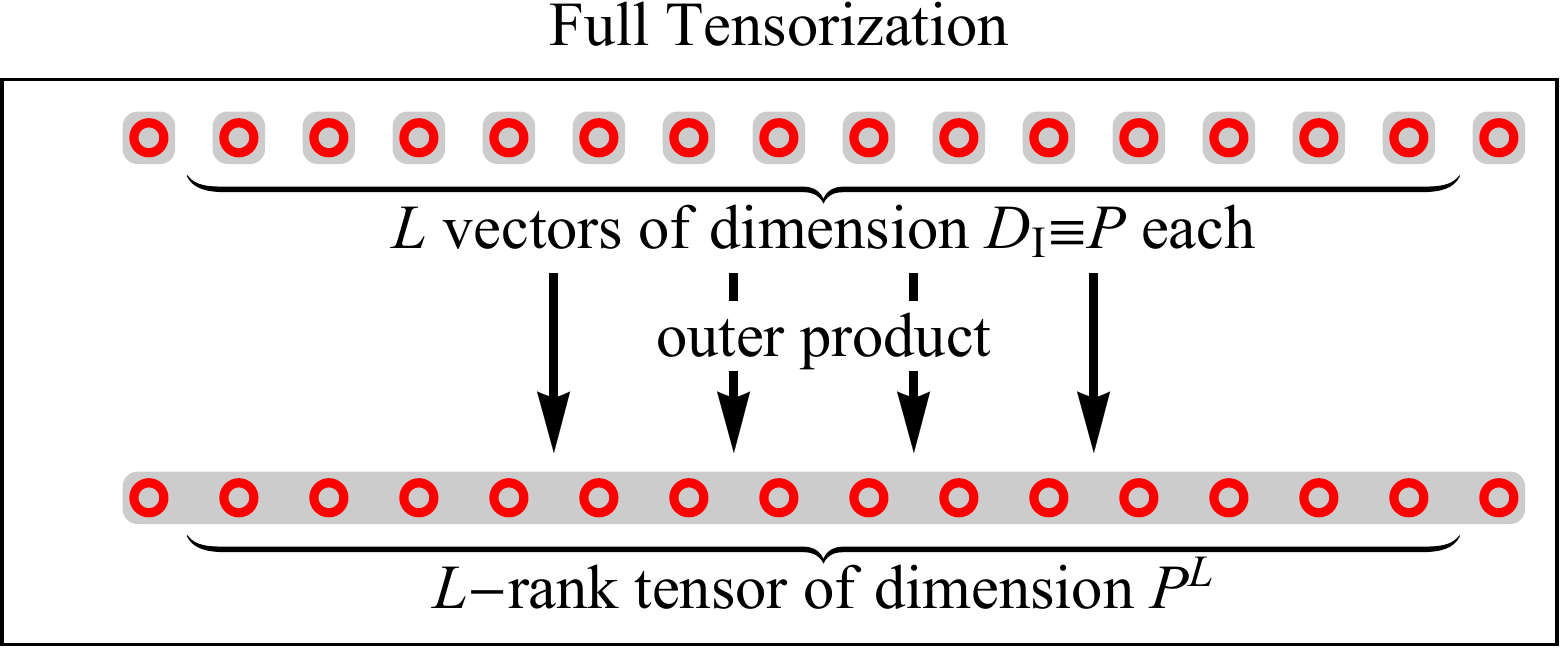}
			\subcaption{\label{fig_tensor}}
			\vspace{6.18mm}
			\includegraphics[width=58mm]{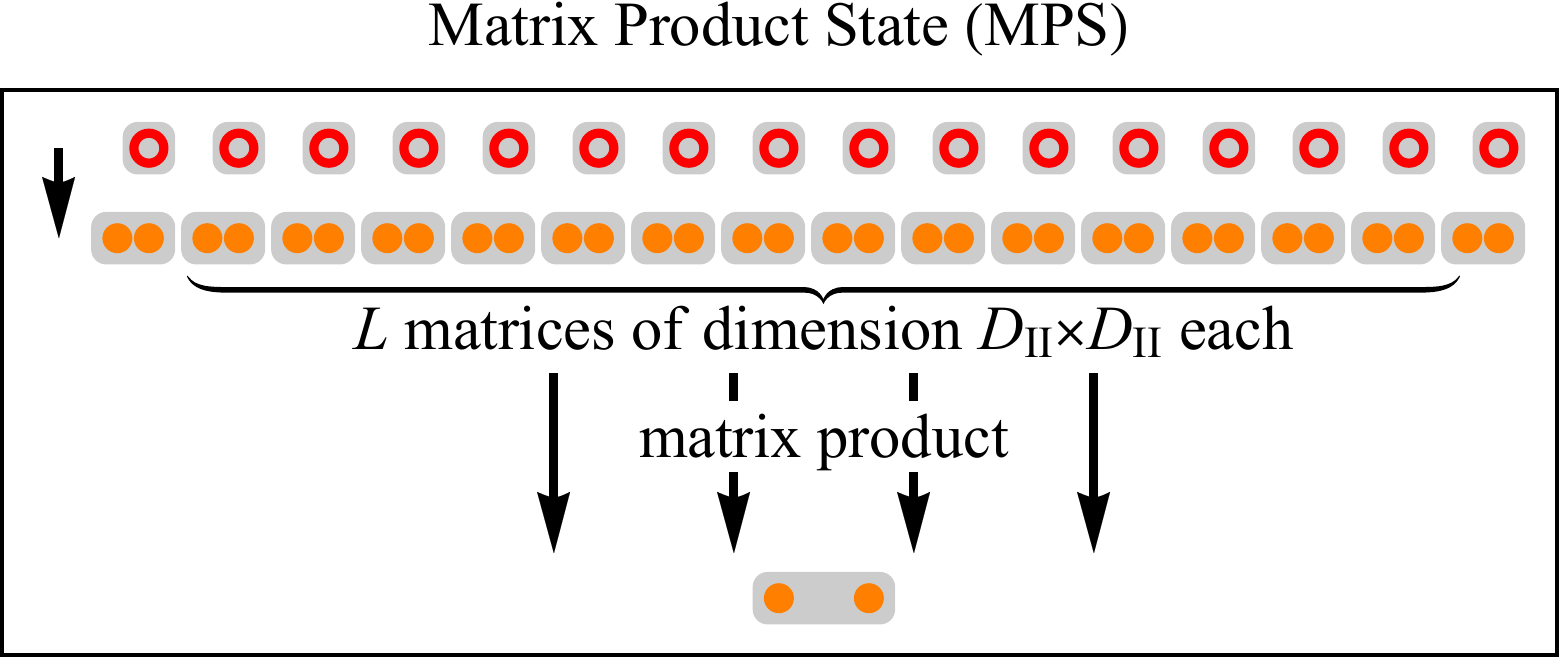}
			\subcaption{\label{fig_mps}}
		\end{minipage}
		\begin{minipage}[b]{58mm}
			\centering
			\includegraphics[width=58mm]{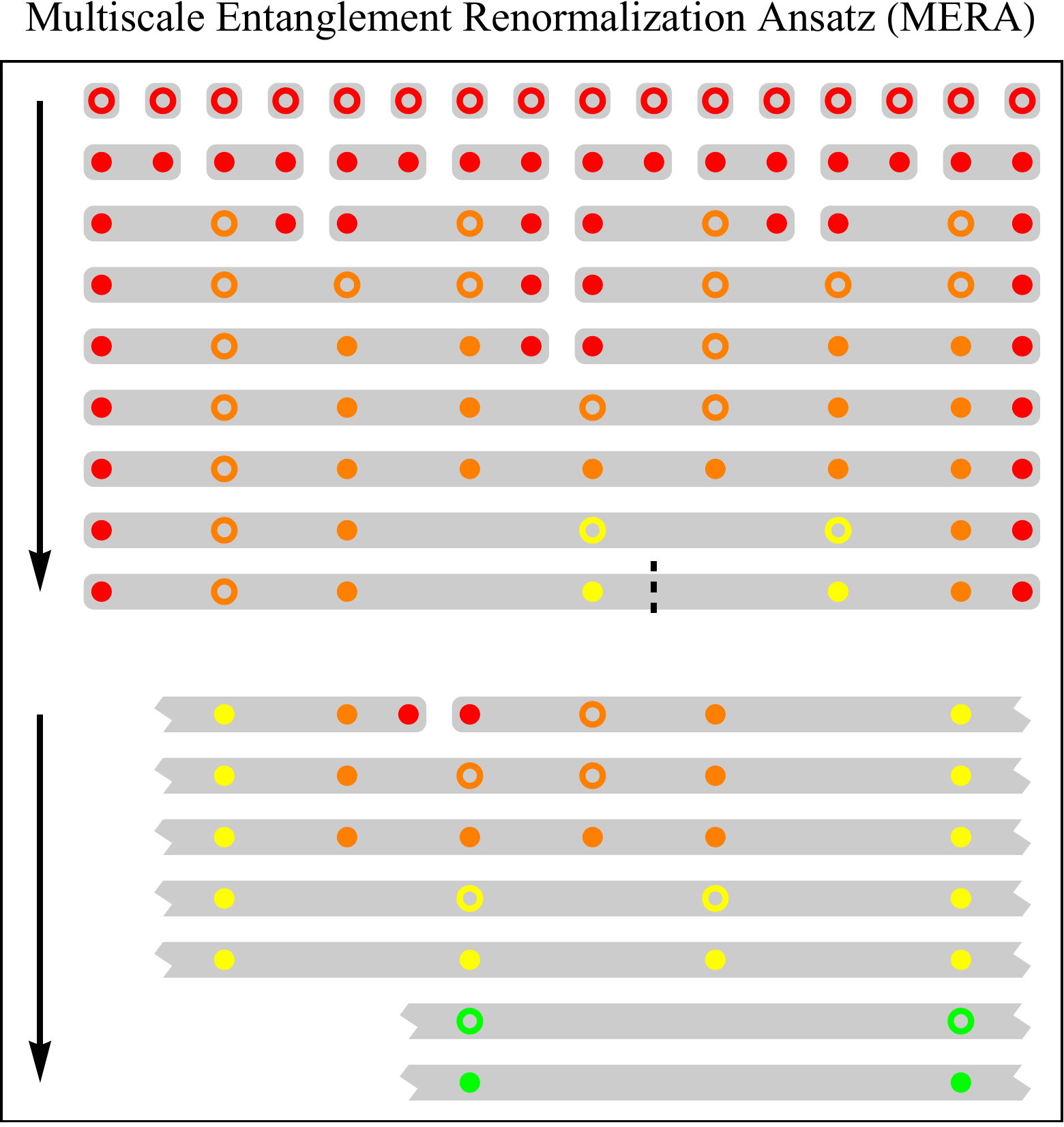}
			\subcaption{\label{fig_mera}}
		\end{minipage}
		\begin{minipage}[b]{58mm}
			\centering
			\includegraphics[width=58mm]{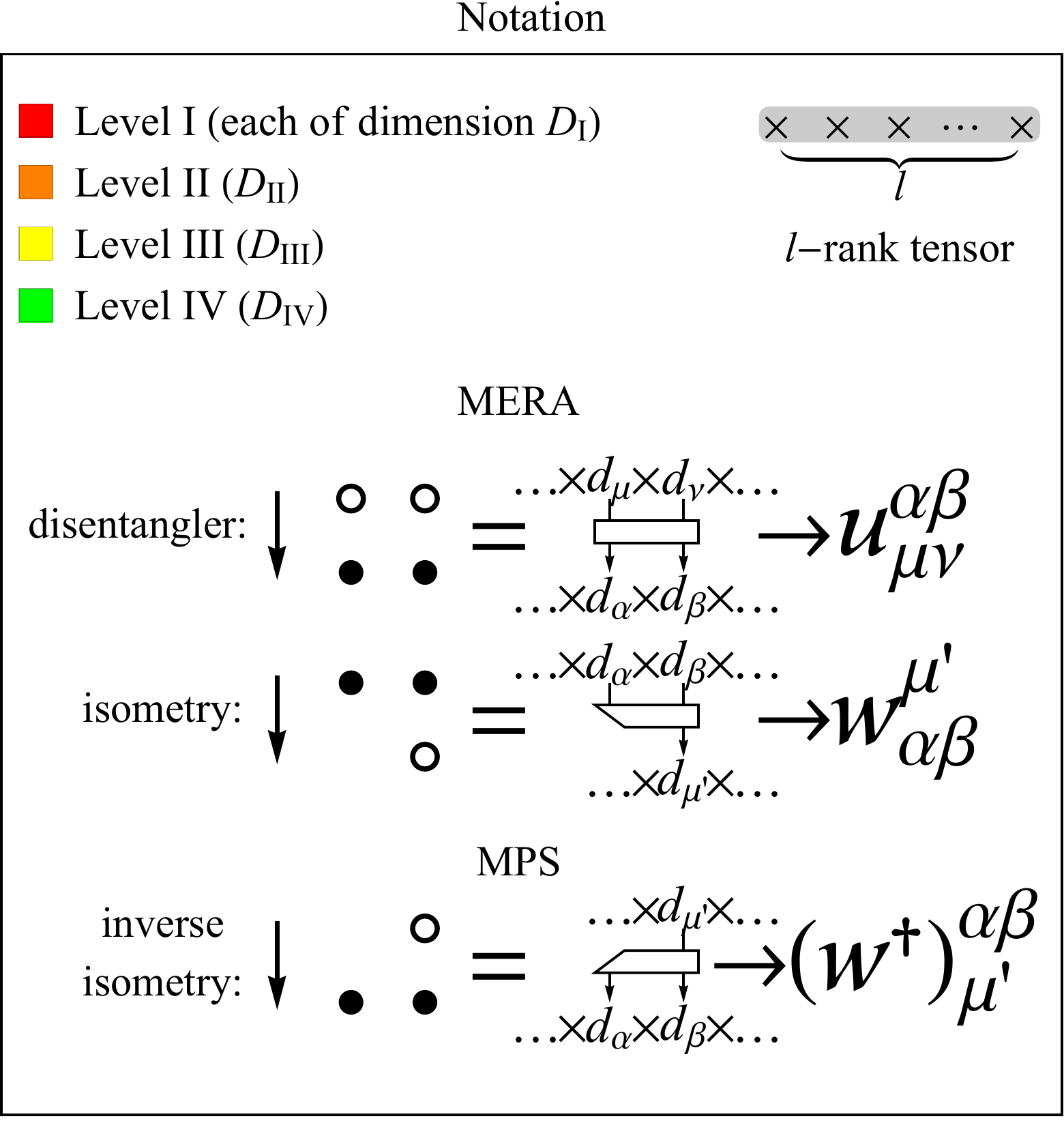}
			\subcaption{\label{fig_notation}}
		\end{minipage}
		
		\vspace{-2mm}
		\caption{\label{fig_tensorization}Tensorize layer: quantum entanglement structures. 
			\subref{fig_tensor},~Full tensorization. \subref{fig_mps},~Matrix product state (MPS). \subref{fig_mera},~Multiscale entanglement renormalization ansatz (MERA). The MPS and MERA are tensor representations that are widely used for characterizing many-body quantum entanglement in condensed matter physics. \subref{fig_notation},~Notations. A full tensor can be represented by introducing multiple auxiliary and learnable tensors (e.g., disentanglers and isometries as used in MERA and inverse isometries as used in MPS) of different virtual dimensions $\{D_\text{I},D_\text{II},\cdots\}$ labeled by different levels, rendered by different colors. The first-level virtual dimension is $D_\text{I}\equiv P$, the physical DOF by definition. Other virtual dimensions $\{D_\text{II},\cdots\}$ are free hyperparameters to be chosen, the larger which the better should the representation of the full tensor be. The numbers of applicable levels in \subref{fig_tensor} and \subref{fig_mps} are always constant (one and two, respectively), yet the number of applicable levels in \subref{fig_mera} is $\log_2 L$, relying on the physical length $L$.}
	\end{figure*}
	
	\section{Theoretical Analysis}
	\subsection{Expressive Power}
	Adding the tensorized state propagation function, Eq.~(\ref{eq_tensor-operator}), to the LSTM architecture is key for learning chaotic time series, as explained below.
	\begin{lemma}
		\label{lemma_chaos}
		Given an LSTM architecture of form Eq.~(\ref{eq_lstm-operator}) which produces a chaotic dynamical system $x_t$, characterized by a matrix $\lambda$ of which the spectrum is the Lyapunov exponent(s), i.e., any variation $\delta x_t$ propagates exponentially 
		[Eq.~(\ref{eq_lyapunov-exp})], then, up to the first order (i.e., $\delta x_{t-1}$),
		\begin{eqnarray}
		\label{eq_lemma_chaos}
		\left|\delta s_t\right|\ge C e^{\lambda}\left|\delta c_t\right|,
		\end{eqnarray}
		where $C\propto1/\|\mathcal{W}\|_{\infty}^{2}$ and $\|\cdot\|_{p=\infty}$ is the operator norm.
	\end{lemma}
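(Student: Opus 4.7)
The plan is to chain three first-order estimates. Step~(i): invoke the Lyapunov hypothesis Eq.~(\ref{eq_lyapunov-exp}) at the output level, $|\delta x_t|\approx e^{\lambda}|\delta x_{t-1}|$. Step~(ii): turn the read-out equation $x_t=g(1,s_t;\mathcal{W}_x)=\sigma(\mathcal{W}_x(1\oplus s_t))$ into the Lipschitz upper bound $|\delta x_t|\le\|\mathcal{W}_x\|_{\infty}|\delta s_t|$, using that $\sigma\in\{\tanh,\mathrm{sigmoid}\}$ has derivative bounded by $1$ everywhere and that $\circ$ is entry-wise. Combining (i) and (ii) yields the lower bound $|\delta s_t|\ge e^{\lambda}|\delta x_{t-1}|/\|\mathcal{W}_x\|_{\infty}$. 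Step~(iii): linearize the cell-state recursion in Eq.~(\ref{eq_lstm-operator}) \emph{only} in $\delta x_{t-1}$ (which is what the statement's phrase ``up to the first order (i.e.,~$\delta x_{t-1}$)'' selects) and read off the complementary upper bound $|\delta c_t|\le C'\|\mathcal{W}\|_{\infty}|\delta x_{t-1}|$. Dividing then gives Eq.~(\ref{eq_lemma_chaos}) with $C\propto 1/\|\mathcal{W}\|_{\infty}^{2}$.

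For Step~(iii), I would differentiate $c_t=g_f\circ c_{t-1}+g_i\circ g_m$ with respect to $x_{t-1}$ while holding $s_{t-1}$ and $c_{t-1}$ fixed, consistent with the first-order reading. Each gate $g_\bullet=\sigma(\mathcal{W}_\bullet(1\oplus x_{t-1}\oplus s_{t-1}))$ contributes, by the same Lipschitz argument as in Step~(ii), a Jacobian in $x_{t-1}$ whose operator norm is at most $\|\mathcal{W}_\bullet\|_{\infty}$. The Hadamard co-factors on the attractor are either outputs of $\sigma$ (hence bounded by $1$) or $c_{t-1}$ (bounded by a constant on any bounded chaotic orbit), so summing the resulting terms produces $|\delta c_t|\lesssim\|\mathcal{W}\|_{\infty}|\delta x_{t-1}|$, with the numerical constant absorbed into $C$.

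The main obstacle is bookkeeping the direct-sum structure of the gates inside the norm $\|\mathcal{W}\|_{\infty}$: each $\mathcal{W}_\bullet$ acts on $1\oplus x_{t-1}\oplus s_{t-1}$, but only its $x_{t-1}$-block propagates $\delta x_{t-1}$, so the symbol $\|\mathcal{W}\|_{\infty}$ in the statement must be interpreted as the maximum of the partial operator norms over the four LSTM gates and the read-out $\mathcal{W}_x$; I would make this convention explicit at the outset so that Steps~(ii) and~(iii) feed a single constant into the final ratio. A secondary subtlety is that a \emph{lower} bound on $|\delta s_t|$ requires nondegenerate linearization: I would phrase the conclusion as a generic first-order statement and remark that $\sigma'$ is strictly positive for $\tanh/\mathrm{sigmoid}$, so the Jacobian linking $\delta s_t$ to $\delta x_t$ has full rank almost everywhere on the attractor, allowing the $\approx$ in Eq.~(\ref{eq_lyapunov-exp}) to transfer cleanly to a $\ge e^{\lambda}$ in the conclusion without pathological cancellations.
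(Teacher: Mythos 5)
Your proposal follows essentially the same route as the paper's proof: bound $|\delta x_t|\le\|\mathcal{W}_x\|_{\infty}|\delta s_t|$ from the read-out gate, expand $\delta c_t$ to first order in $\delta x_{t-1}$ with gate Jacobians bounded by $\|\mathcal{W}_\bullet\|_{\infty}$ and bounded Hadamard co-factors, and combine with the one-step Lyapunov relation under the convention that all gate norms are of common magnitude $\|\mathcal{W}\|_{\infty}$, yielding $C\propto1/\|\mathcal{W}\|_{\infty}^{2}$. The only cosmetic difference is that you justify the boundedness of $c_{t-1}$ by boundedness of the chaotic orbit, whereas the paper argues it from the forget gate being bounded by $1$ (stationarity of $c_t$); the argument is otherwise identical.
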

	\begin{proof}
		From Eq.~(\ref{eq_lstm-operator}) one has $\delta x_t=(\partial g(1,s_t;\mathcal{W}_x)/\partial s_t)\delta s_t$ where the first-order derivative is bounded by $\|\partial g(1,s_t;\mathcal{W}_x)/\partial s_t\|_{\infty}\le\|\mathcal{W}_x\|_{\infty}\|\sigma'\|_{L_{\mu}^{\infty}}\le\|\mathcal{W}_x\|_{\infty}$, since the derivative of the active function supported on $(-\infty,\infty)$ satisfies $\|\sigma'\|_{L_{\mu}^{\infty}}\equiv\|1/\cosh^2\|_{L_{\mu}^{\infty}}\le 1$. On the other hand, one has
		\begin{eqnarray*}
			\delta c_t=\left[c_{t-1}\circ \partial g(1,x_{t-1},s_{t-1};\mathcal{W}_f)/\partial x_{t-1}+\partial\left(g(1,x_{t-1},s_{t-1};\mathcal{W}_i)\right.\right.\nonumber\\
			\left.\left. \circ\, g(1,x_{t-1},s_{t-1};\mathcal{W}_m)\right)/\partial x_{t-1}\right]\delta x_{t-1}+O(\delta x_{t-2})+\cdots,\qquad
		\end{eqnarray*}
		and thus
		\begin{equation*}
		|\delta c_t|\le \|\mathcal{W}_f\|_{\infty}\left|c_{t-1}\right|\circ \delta |x_{t-1}|+\left(\|\mathcal{W}_i\|_{\infty}+\|\mathcal{W}_m\|_{\infty}\right)\delta |x_{t-1}|
		\end{equation*}
		which produces Eq.~(\ref{eq_lemma_chaos}) supposing that all linear maps are of the same magnitude $\sim\|\mathcal{W}\|_{\infty}$. Note that $\left|c_{t-1}\right|$ is also bounded because $|g(1,x_{t-1},s_{t-1};\mathcal{W}_f)|\le1$ which means $c_t$ is stationary.
	\end{proof}
	Equation~(\ref{eq_lemma_chaos}) suggests that the state propagation from $c_t$ to $s_t$ carries the chaotic behavior. In fact, to preserve the long-term memorization in LSTM, $c_t$ has to depend on $c_{t-1}$ in a linear behavior and thus cannot carry chaos itself. This is further experimentally verified in Section \ref{sec_alternative}. Still, note that Eq.~(\ref{eq_lyapunov-exp}) is a necessary condition of chaos, not a sufficient condition.
	
	By virtue of the well-behaved polynomial structure of tensorization, the following theorem is proved for estimating the expressive power of the introduced tensorized state propagation function $\mathcal{T}(\sigma(c_t))$.
	\begin{theorem}
		Let  $f\in H_{\mu}^k(\Lambda)$ be a target function living in the $k$-Sobolev space, $H_{\mu}^k(\Lambda)=\left\{f\in L_{\mu}^2(\Lambda)\right|\left.\sum_{|\mathbf{i}|\le k}{\|\partial^{(\mathbf{i})}f\|_{L_{\mu}^2(\Lambda)}}<\infty\right\}$, where $\partial^{(\mathbf{i})}f\in L_{\mu}^2(\Lambda)$ is the $\mathbf{i}$-th weak derivative of $f$, up to order $k\ge0$, square-integrable on support $\Lambda=(-1,1)^{h}$ with measure $\mu$. $\mathcal{W}_{\mathcal{T}}\mathcal{T}(\sigma(c_t))$ can approximate $f(\sigma(c_t))$ with an $L_{\mu}^2(\Lambda)$ error at most
		\begin{equation}
		\label{eq_theorem_err}
		\|f-\mathcal{W}_{\mathcal{T}}\mathcal{T}\|_{L_{\mu}^2(\Lambda)}\le C \min(L,\lfloor L(P-1)/h\rfloor)^{-k}\|f\|_{H_{\mu}^k(\Lambda)}
		\end{equation}
		provided that $(h-1)hP^L\ge(h^{1+\min(L,\lfloor L(P-1)/h\rfloor)}-1)$. \space\space $\|f\|_{H_{\mu}^k(\Lambda)} =\sum_{|\mathbf{i}|\le k}{\|\partial^{(\mathbf{i})}f\|_{L_{\mu}^2(\Lambda)}}$ is the Sobolev norm and $C$ a finite constant.
	\end{theorem}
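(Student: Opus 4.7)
The plan is to reduce Eq.~(\ref{eq_theorem_err}) to a classical Jackson-type polynomial approximation estimate in Sobolev spaces and then show that, under the dimensional hypothesis, the tensorized architecture can exactly realize the approximating polynomial. Write $n^{\star} := \min(L,\lfloor L(P-1)/h\rfloor)$ for the degree that will appear in the approximation rate.

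First, since $\sigma=\tanh$ maps into $\Lambda=(-1,1)^h$, I would apply a standard multivariate polynomial approximation estimate on the cube---either the Bramble--Hilbert lemma or a multivariate Jackson inequality with weight $\mu$---to produce, for each integer $n\ge 1$, a polynomial $p_n:\Lambda\to\mathbb{R}$ of total degree $\le n$ in $h$ variables with
\[
\|f-p_n\|_{L^2_{\mu}(\Lambda)}\le C\,n^{-k}\,\|f\|_{H^k_{\mu}(\Lambda)}.
\]
This delivers the claimed $n^{-k}$ decay; the remaining task is to verify that $\mathcal{W}_{\mathcal{T}}\mathcal{T}$ can realize $p_{n^{\star}}$ exactly.

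For the representation step I would use Eq.~(\ref{eq_tensor-operator}) to observe that each of the $P^L$ entries of $\mathcal{T}(\sigma(c_t))$ is a product of $L$ factors, each either $1$ or a learnable linear form of $\sigma(c_t)$, so $\mathcal{W}_{\mathcal{T}}\mathcal{T}$ is automatically a polynomial of degree $\le L$ in the $h$ components of $\sigma(c_t)$. I would then split into two regimes. (a) If $P-1\ge h$, choose every $\mathcal{W}_l$ so that its $P-1$ outputs contain the $h$ coordinate functions of $\sigma(c_t)$; then each slot can contribute any of the $h$ variables or $1$, so every monomial of total degree $\le L$ appears as an entry of $\mathcal{T}$. (b) If $P-1<h$, spread the $h$ coordinates across slots so that each variable is available in at least $\lfloor L(P-1)/h\rfloor$ slots, and use a Hall-type matching/SDR argument to realize any monomial of degree $\le\lfloor L(P-1)/h\rfloor$ as a product over a chosen set of distinct slots. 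In either regime the span of the $\mathcal{T}$-entries, viewed as polynomials in $\sigma(c_t)$, contains the full space of polynomials of total degree $\le n^{\star}$. The hypothesis $(h-1)hP^L\ge h^{1+n^{\star}}-1$, rewritten as $hP^L\ge 1+h+\cdots+h^{n^{\star}}$, then guarantees enough free entries in $W_{\mathcal{T}}$ to freely assign the coefficients of $p_{n^{\star}}$ across all $h$ output components---this ordered-word count is a loose but convenient upper bound on the total number $h\binom{n^{\star}+h}{h}$ of monomial coefficients.

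Combining the two steps at $n=n^{\star}$ gives Eq.~(\ref{eq_theorem_err}). The main obstacle is regime (b) in the representation step: making precise the combinatorial spreading of coordinates across slots so that the achievable degree is exactly $\lfloor L(P-1)/h\rfloor$, and verifying via Hall's theorem (or an explicit cyclic construction) that the assembly succeeds uniformly over all monomials of that degree. A secondary subtlety is the dimensional bookkeeping---the hypothesis uses the inflated count $h^{n^{\star}+1}-1$ rather than the sharper binomial count $\binom{n^{\star}+h}{h}$, which is consistent with the coarse slot-by-slot counting that the expressive-power argument naturally produces but may leave slack in the stated ``provided that'' clause.
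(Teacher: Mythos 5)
Your proposal follows essentially the same route as the paper's proof: a polynomial best-approximation estimate in $H^k_\mu(\Lambda)$ (the paper invokes the spectral convergence theorem of Canuto--Quarteroni, equivalent in role to your Jackson/Bramble--Hilbert step) combined with the observation that $\mathrm{span}(\mathcal{T}(\sigma(c_t)))$ realizes all polynomials of degree up to $\min(L,\lfloor L(P-1)/h\rfloor)$, with the hypothesis $(h-1)hP^L\ge h^{1+N}-1$ read as the counting condition $hP^L\ge\sum_{i=0}^{N}h^i$. Your Hall-type matching argument for the regime $P-1<h$ and your remark that the stated count is looser than the binomial dimension actually supply detail that the paper's proof only asserts, so the proposal is correct and, if anything, more explicit on that step.
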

	\begin{proof}
		The \emph{H\"{o}lder-continuous spectral convergence theorem}~\cite{spectral-converg_cq82} states that $\|f-\text{P}_N f\|_{L_{\mu}^2(\Lambda)}\le C N^{-k}\|f\|_{H_{\mu}^k(\Lambda)}$, in which $\text{P}_N:L_{\mu}^2(\Lambda)\to \mathbb{P}_N$ is an orthogonal projection that maps $f$ to $\text{P}_Nf$. $\sigma(c(t))\in\Lambda$ is guaranteed as $\sigma\equiv\tanh$. The Sobolev space $\mathbb{P}_N\subset L_{\mu}^2(\Lambda)$ is spanned by polynomials of degree at most $N$. Next, note that in the realization of $\mathcal{T}(\sigma(c_t))$ each $W_l$ is independent [Eq.~(\ref{eq_tensor-operator})], and thus $\mathbb{P}_N=\text{span}(\mathcal{T}(\sigma(c_t)))$ is possible, where $N$ is determined by $L$, $P$, and $h$. When $P-1\ge h$, the maximum polynomial order is guaranteed $L$; when $P-1<h$, $\dim\{\mathbb{Q}\}<\dim\{\mathbb{G}\}$, and hence $\mathcal{T}(\sigma(c_t))$ can only fully cover polynomial order of up to $\lfloor L(P-1)/h\rfloor$. Finally, Eq.~(\ref{eq_theorem_err}) is proved from the fact that $\mathcal{W}_{\mathcal{T}}\mathcal{T}$ maximally admits $\text{P}_N f$ 
		as long as $hP^L\ge\sum_{i=0}^N h^i=(h^{N+1}-1)/(h-1)$, the latter of which is the size of the maximum orthogonal polynomial basis admitted by $\mathbb{P}_N$.
	\end{proof}
	Equation~(\ref{eq_theorem_err}) can be used to estimate how $L$ scales with the chaos the dynamical system possesses. In particular, Eq.~(\ref{eq_lemma_chaos}) suggests that $\partial^{(1)}f\sim e^{\lambda \Delta t}$ where $\Delta t$ is the actual time difference between consecutive steps. Therefore, to persevere the error bound [Eq.~(\ref{eq_theorem_err})] one at least expects $L^{-1}\sim e^{-\lambda  \Delta t}$, i.e., $L$ has to increase exponentially with respect to $\lambda  \Delta t$, to achieve which, tensorization is undoubtedly the most efficient way especially when $\Delta t$ is large.
	
	\begin{figure}[t]
		\raggedright
		\begin{minipage}[t]{86mm}
			\centering
			\includegraphics[width=50mm]{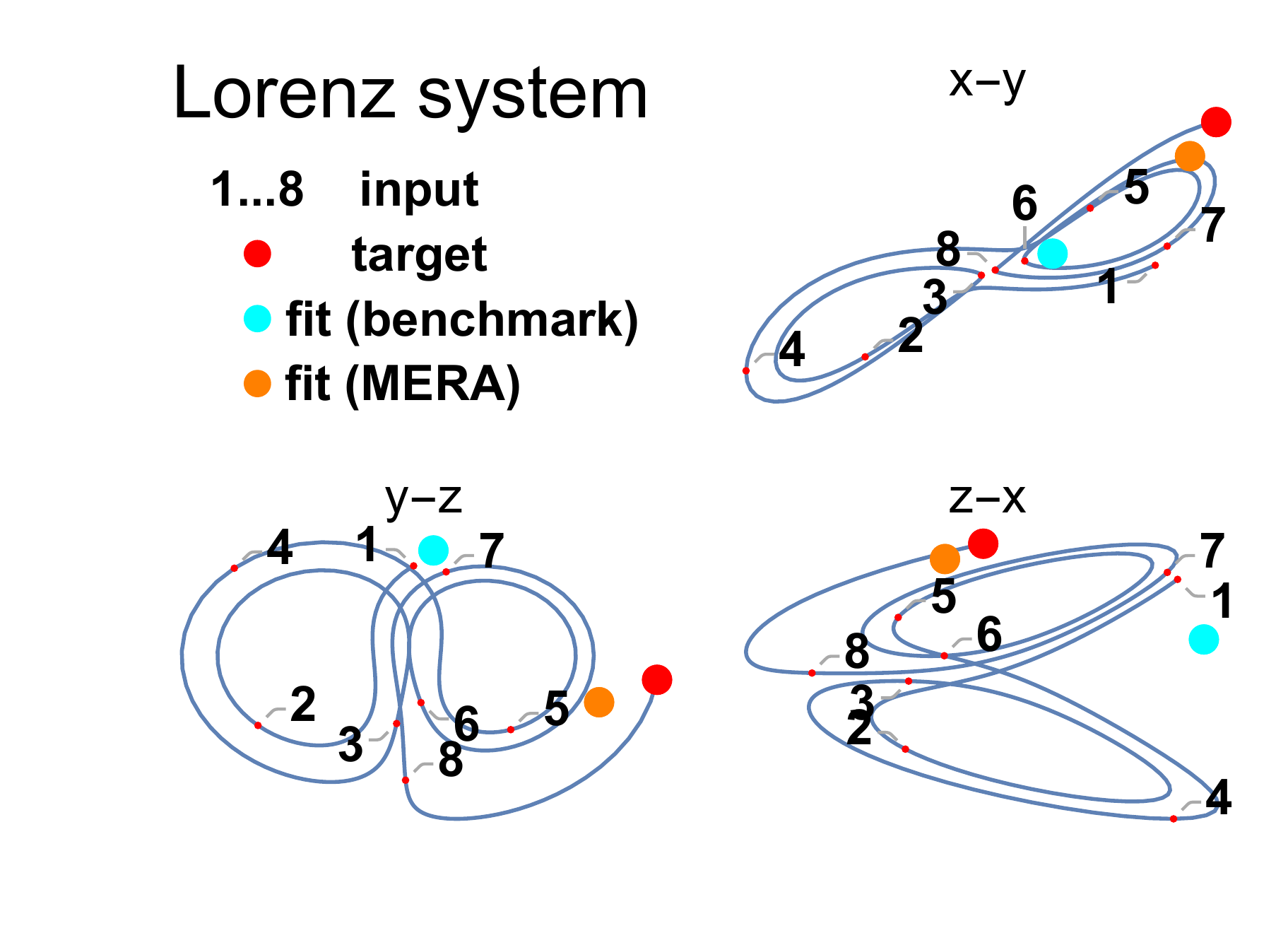}
			\begin{tabular}{lcccccc}
				\hline
				LSTM  & \# of param. & $h$ & $L$ & $P$ & $\{D_{\text{I}},D_{\text{II}},\cdots\}$ & RMSE \\
				\hline
				Benchmark  & $332$ & $7$ & -- & -- & -- & $0.307$ \\
				``Wider''  & $696$ & $11$ & -- & -- & -- & $0.279$ \\
				``Deeper''  & $640$ & $7$ & -- & -- & -- & $0.105$ \\
				MPS & $663$ & $7$ & $2^3$ & $2$ & $\{P,4\}$ & $0.088$ \\
				\textbf{MERA} & \boldmath$640$ & \boldmath$7$ & \boldmath$2^3$ & \boldmath$2$ & \boldmath$\{P,2,3\}$ & \boldmath$0.066$ \\
				\hline
			\end{tabular}
			\subcaption{\label{fig_lorenz}}
		\end{minipage}
		\hspace{4mm}
		\begin{minipage}[t]{86mm}
			\centering
			\includegraphics[width=50mm]{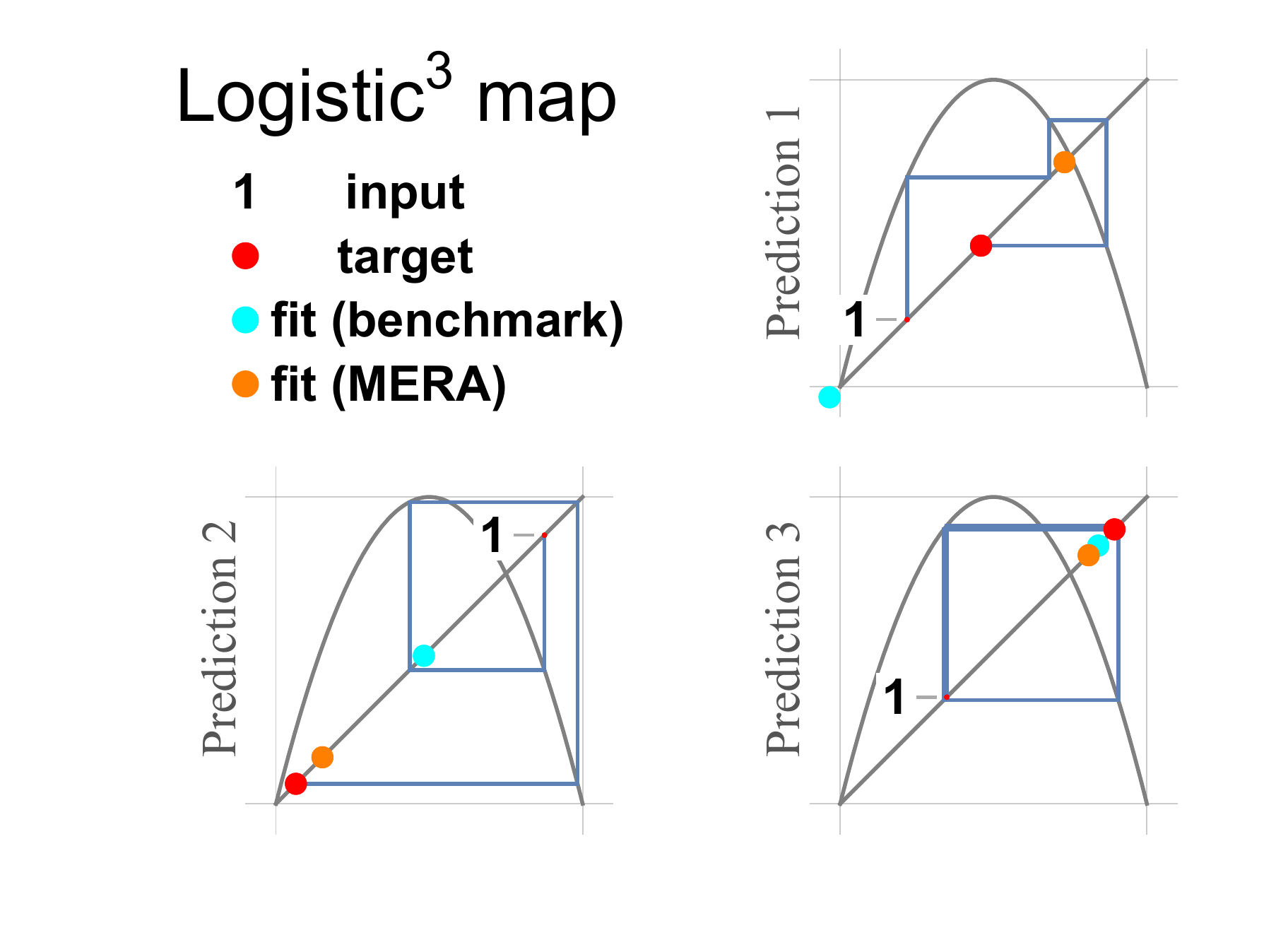}
			\begin{tabular}{lcccccc}
				\hline
				LSTM  &  \# of param. & $h$ & $L$ & $P$ & $\{D_{\text{I}},D_{\text{II}},\cdots\}$ & RMSE \\
				\hline
				Benchmark &  $35$ & $2$ & -- & -- & -- & $0.259$ \\
				``Wider'' &  $1169$ & $16$ & -- & -- & -- & $0.187$ \\
				``Deeper'' &  $1156$ & $11$ & -- & -- & -- & $0.204$ \\
				MPS&  $1231$ & $2$ & $2^3$ & $2$ & $\{P,9\}$ & $0.181$ \\
				\textbf{MERA} & \boldmath$1053$ & \boldmath$2$ & \boldmath$2^3$ & \boldmath$2$ & \boldmath$\{P,4,4\}$ & \boldmath$0.010$ \\
				\hline
			\end{tabular}
			\subcaption{\label{fig_logistic}}
		\end{minipage}
		
		\vspace{-2.0mm}
		\caption{\label{fig_learning-curves}Comparison of different LSTM-based architectures. \subref{fig_lorenz},~Lorenz system is a three-dimensional continuous-time dynamical system notable for its chaotic behavior. Discretization: $\Delta t=0.5$. Input steps~$=8$, training : validation : text $=2400:600:2000$, and number of epochs~$=120$ for all models. \subref{fig_logistic},~Logistic ``cubed'' map, i.e., a logistic map re-sampled every three steps. Input steps~$=1$, training : validation : text $=8000:2000:500$, and number of epochs~$=200$ for all models. Note that unlike continuous-time dynamical systems, chaos in discrete maps is more intrinsic and thus should be generally harder to learn. For example, the continuous counterpart of logistic map, a.k.a. the logistic differential equation does not exhibit any chaotic behavior.}
		\vspace{-1.0mm}
	\end{figure}
	
	\subsection{Worst-Case Bound by EE}
	The above analysis emphasizes the expressive power of tensorization. Now we compare the two different entanglement structures. A major difference between MPS and MERA is their EE scaling behaviors. We therefore proceed via the following theorem, relating the tensor approximation error and entanglement scaling.
	
	\begin{theorem}
		Given a tensor $[W_\mathcal{T}]_{\mu_1\cdots\mu_L}$ and its tensor decomposition $\overline{W}_\mathcal{T}$, the worst-case $p$-norm ($p\ge1$) approximation error is bounded from below by
		\begin{align}
		\label{eq_ee_err}
		&\min_{\{\overline{W}_\mathcal{T}\}}\max_{l \ge 1}\|W_\mathcal{T}(l)-\overline{W}_\mathcal{T}(l)\|_{p}\nonumber\\
		\ge&\min_{\{\overline{W}_\mathcal{T}\}} \max_{l \ge 1} \left|
		e^{\frac{1-p}{p}S_{p}(W_\mathcal{T}(l))} \|W_\mathcal{T}(l)\|_1 -
		e^{\frac{1-p}{p}S_{p}(\overline{W}_\mathcal{T}(l))} \|\overline{W}_\mathcal{T}(l)\|_1\right|,
		\end{align}
		where $S_{\alpha\equiv p}(W(l))$ is the $\alpha$-R\'enyi entropy [Eq.~(\ref{eq_ee})].
	\end{theorem}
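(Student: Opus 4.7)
The plan is to show that the right-hand side is nothing more than the reverse triangle inequality applied term-by-term once the entropy factor is unpacked. First I would interpret $\|\cdot\|_p$ as the Schatten $p$-norm on the matricization $W_\mathcal{T}(l) = W_{(\mu_1\times\cdots\times\mu_l),(\mu_{l+1}\times\cdots\times\mu_L)}$, since the matricization is the natural object on which the singular values $\sigma_i(W_\mathcal{T}(l))$ in Eq.~(\ref{eq_ee}) live. With that convention, $\|W_\mathcal{T}(l)\|_1 = \sum_i \sigma_i(W_\mathcal{T}(l))$ (nuclear norm) and $\|W_\mathcal{T}(l)\|_p = \bigl(\sum_i \sigma_i^p(W_\mathcal{T}(l))\bigr)^{1/p}$.

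The key identity I would then establish is
\begin{equation*}
e^{\frac{1-p}{p}\,S_p(W_\mathcal{T}(l))}\,\|W_\mathcal{T}(l)\|_1 \;=\; \|W_\mathcal{T}(l)\|_p,
\end{equation*}
which follows directly by substituting the definition, Eq.~(\ref{eq_ee}), with $\alpha = p$: the exponent becomes $\tfrac{1}{p}\log\bigl[(\sum_i\sigma_i^p)/(\sum_i\sigma_i)^p\bigr]$, so the prefactor equals $\|W_\mathcal{T}(l)\|_p/\|W_\mathcal{T}(l)\|_1$, and multiplying by $\|W_\mathcal{T}(l)\|_1$ yields $\|W_\mathcal{T}(l)\|_p$. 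The same identity holds for $\overline{W}_\mathcal{T}(l)$.

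Next I would invoke the reverse triangle inequality for the Schatten $p$-norm (valid for $p \ge 1$ since $\|\cdot\|_p$ is then a genuine norm): for every fixed cut $l$,
\begin{equation*}
\|W_\mathcal{T}(l)-\overline{W}_\mathcal{T}(l)\|_p \;\ge\; \bigl|\,\|W_\mathcal{T}(l)\|_p - \|\overline{W}_\mathcal{T}(l)\|_p\,\bigr|.
\end{equation*}
Substituting the identity above into the right-hand side gives exactly the integrand appearing in Eq.~(\ref{eq_ee_err}). Taking $\max_{l\ge1}$ of both sides preserves the inequality since the left-hand integrand dominates the right-hand one pointwise in $l$, and then taking $\min_{\{\overline{W}_\mathcal{T}\}}$ of both sides again preserves the inequality, yielding the claim.

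The only place I expect any care to be needed is the very first step: confirming that the matricization/Schatten interpretation is the one intended (so that both the norm and the entropy are computed consistently from the same singular spectrum), and checking that $p\ge 1$ is used nowhere beyond the reverse triangle inequality. Once that is fixed, the proof is essentially an algebraic unpacking of the Rényi entropy followed by a one-line norm inequality; there is no combinatorial or variational subtlety in the min-max reduction because the inequalities commute with $\max_l$ and $\min_{\overline{W}_\mathcal{T}}$ trivially. The theorem is thus primarily a statement that \emph{any} tensor decomposition $\overline{W}_\mathcal{T}$ whose entanglement spectrum $\{\sigma_i(\overline{W}_\mathcal{T}(l))\}$ cannot reproduce the scaling of $S_p(W_\mathcal{T}(l))$ at some cut $l$ is forced to incur a worst-case approximation error controlled by that mismatch, which is the geometric content that motivates preferring MERA over MPS when the target tensor exhibits logarithmic EE growth.
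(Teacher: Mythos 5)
Your proposal is correct and is essentially the paper's own argument: the identity $e^{\frac{1-p}{p}S_p(W(l))}\|W(l)\|_1=\|W(l)\|_p$ is exactly the paper's observation that $(1-\alpha)S_{\alpha}(l)=\alpha\log\|W(l)\|_{\alpha}-\alpha\log\|W(l)\|_{1}$ for $\alpha\equiv p\ge1$, and your reverse triangle inequality for the Schatten $p$-norm is the same content as the Minkowski inequality the paper invokes, with the $\max_l$ and $\min_{\{\overline{W}_\mathcal{T}\}}$ steps passing through trivially as you note. Your explicit remark that the norms must be read as Schatten norms of the matricization (so norm and entropy share the same singular spectrum) is a useful clarification of what the paper leaves implicit.
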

	\begin{proof}
		Equation~(\ref{eq_ee_err}) is easily proved by noting the Minkowski inequality $\|A+B\|_p\le\|A\|_p+\|B\|_p$ and that $(1-\alpha)S_{\alpha}(l)=\alpha \log  \|W_\mathcal{T}(l)\|_{\alpha}-\alpha  \log  \|W_\mathcal{T}(l)\|_{1}$ when $\alpha\equiv p\ge1 $~[Eq.~(\ref{eq_ee})].
	\end{proof}
	
	The worst-case bound [Eq.~(\ref{eq_ee_err})] is optimized whenever $S_{p}(\overline{W}_\mathcal{T}(l))$ scales the same way as $S_{p}(W_\mathcal{T}(l))$ does. Assuming $S_{p}(W_\mathcal{T}(l))=C+C'\log l$, then an MPS-type $\overline{W}_\mathcal{T}$ cannot efficiently approximate ${W}_\mathcal{T}$ unless $D_\text{II}$ increases with $\log l$ too, from which the total number of free parameters $\sim PLD_\text{II}^2$ [Fig.~\ref{fig_mps}] however becomes unbounded. By contrast, a MERA-type $\overline{W}_\mathcal{T}$ matches the scaling, by which the total number of free parameters $\sim(D^4+D^3)L$ (where $D\equiv D_\text{II},\cdots= \exp C'$) is efficient enough toward any worst case $l$.

	\begin{figure*}[t!]
		\centering
		\hspace*{-10mm}  \includegraphics[width=170mm]{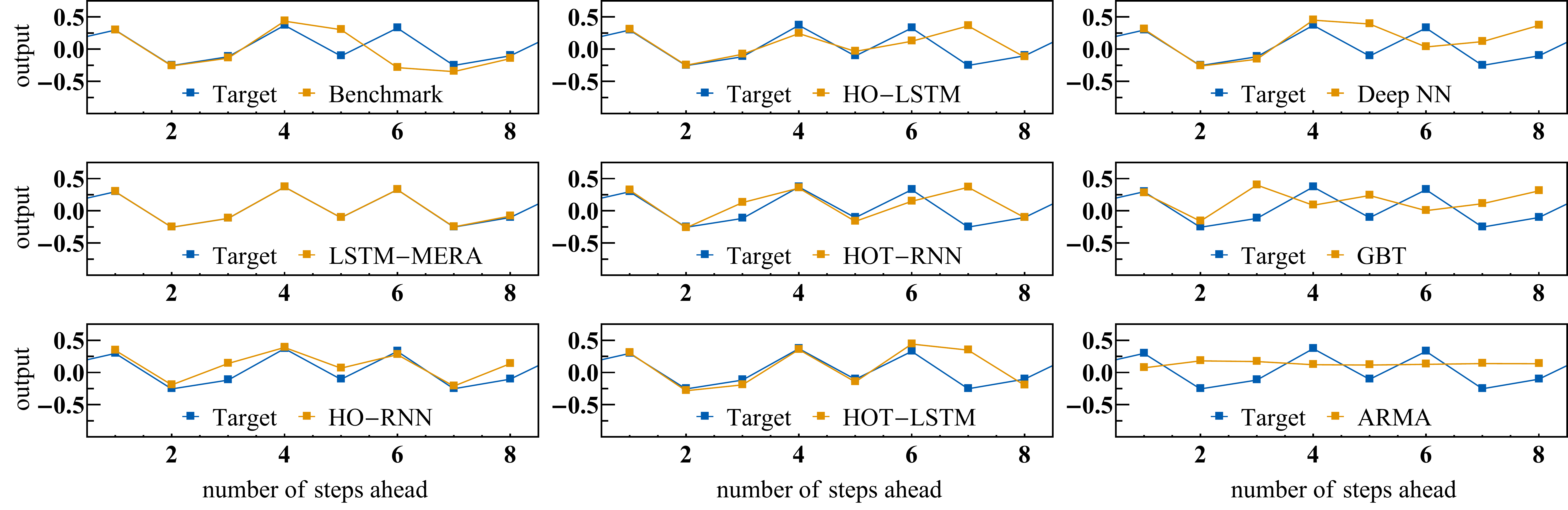}
		\centering
		\begin{minipage}[t]{86mm}
			{	
				\vspace{-2mm}
				\begin{tabular}{l|c|rrr}		
					\toprule
					Model &\# of param.& \multicolumn{3}{c}{RMSE ($\times 10^{-2}$)}\\
					($n$ steps ahead) & & 1 step& 2 steps& 4 steps\\
					\midrule
					Benchmark &35&$1.54$&$7.63$& $32.03$\\
					\textbf{LSTM-MERA} &\textbf{89}&\boldmath$0.19$&\boldmath$0.89$&\boldmath$13.77$\\
					HO-RNN &23&$11.91$&$23.16$&$27.69$\\
					HO-LSTM &83&$2.96$&$14.50$&$47.99$\\
					HOT-RNN &81&$12.04$&$23.76$&$29.61$\\
					HOT-LSTM &315&$1.39$&$6.40$&$26.83$\\
					\midrule
					Deep NN &17950&$0.81$&$3.66$&$23.49$\\
					\midrule
					GBT&--&$3.15$&$16.25$&$31.37$\\
					\midrule
					ARMA&--&$24.95$&$24.93$&$23.54$\\
					\bottomrule
				\end{tabular}
			}
		\end{minipage}
		\begin{minipage}[t]{86mm}
			\vspace{-2mm}
			\caption{\label{fig_gauss}Comparison of different statistical/ML models on Gauss ``cubed'' map, i.e., a Gauss iterated map  re-sampled every three steps. 
				Note that the Gauss iterated map is a one-dimensional chaotic map of which the dynamics is smoother than the logistic map and should be easier to learn. Input steps~$=8$. For RNN-based models, $h=2$, $L=2^2$, $P=2$, $\{D_\text{I},D_\text{II},\cdots\}=\{P,2\}$, training : validation : text $=8000:2000:500$, and number of epochs~$=200$. The explicit ``history'' length used in HO-RNN/LSTM~\cite{ho-rnn_sj16} and HOT-RNN/LSTM~\cite{hot-rnn_yzay19} is also $L$, and the tensor-train ranks are all $D_\text{II}$. Deep NN: depth~$=8$~($=$~input steps). GBT: maximum depth~$=8$. ARMA family: ARMA$(3,4)$.}
		\end{minipage}
	\end{figure*}
	
	\begin{figure}[t]
		\centering
		{
			\includegraphics[width=85mm]{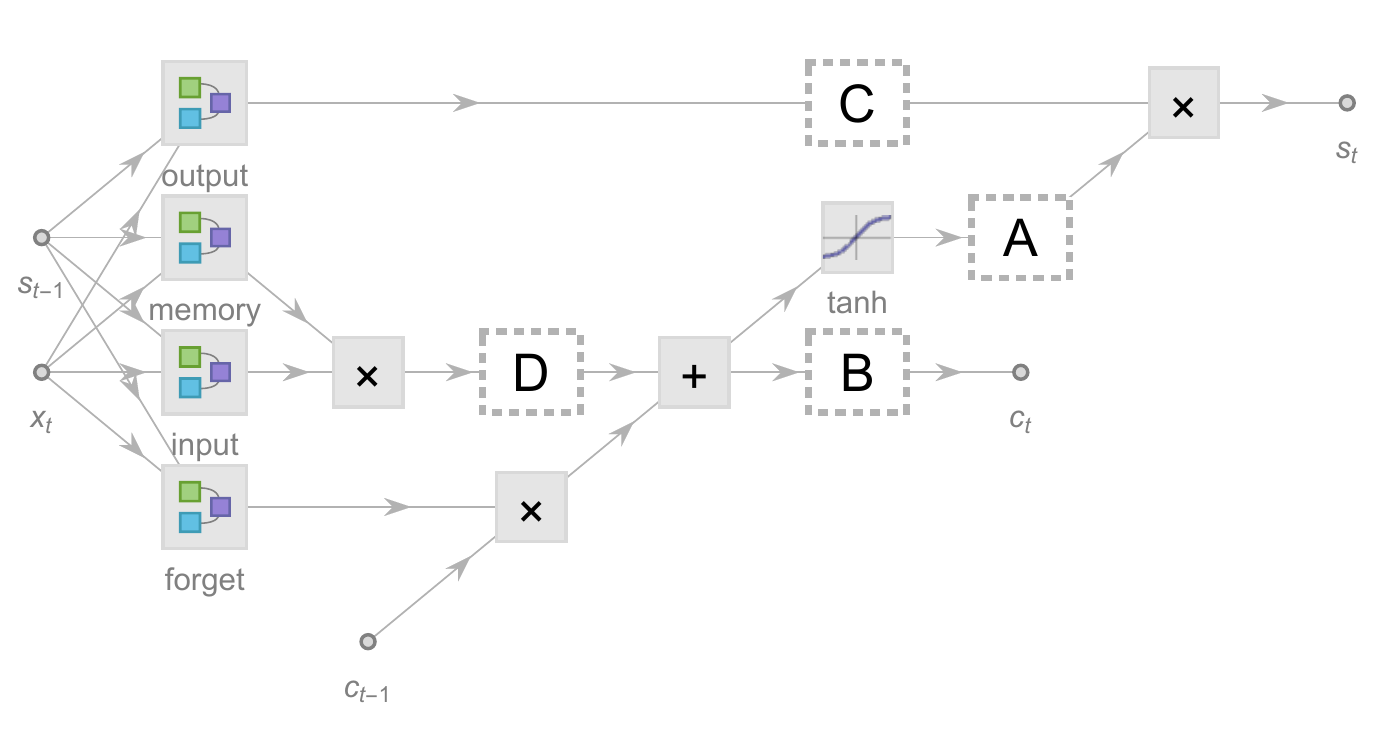}
		}
		\centering
		{
			\begin{tabular}{l|cc}		
				\toprule
				\multicolumn{3}{c}{{Thomas' cyclically symmetric dynamical system}}\\
				\midrule
				Model &Site&RMSE ($\times 10^{-1}$)\\
				\midrule
				Benchmark&&$1.13$\\
				\midrule
				\textbf{LSTM-MERA}&\textbf{A}&\boldmath${0.45}$\\
				\midrule
				Alternatives & B&$1.12$\\
				&C&$1.10$\\
				&D&$0.73$\\
				\bottomrule
			\end{tabular}
		}
		\caption{\label{fig_thomas}Comparison of LSTM-MERA (where the additional layers from Fig.~\ref{fig_netw} are located at Site~A) with its alternatives (where the additional layers are instead located at Site~B,~C~or~D), tested on Thomas' cyclically symmetric system, a three-dimensional chaotic dynamical system known for its cyclic symmetry $\mathbf{Z}/3\mathbf{Z}$ under change of axes. Discretization: $\Delta t=1.0$. Input steps~$=8$, $h=4$, $L=2^4$, $P=4$, $\{D_\text{I},D_\text{II},\cdots\}=\{P,2,2,4\}$, training : validation : text $=2400:600:2000$, and number of epochs~$=40$.}
	\end{figure}
	
	\section{Results}
	\label{sec_result}
	We investigate the accuracy of LSTM-MERA and its ability of generalization on 
	different chaotic time series datasets 
	by evaluating the root mean squared error (RMSE) of its one-step-ahead predictions against target values. The benchmark for comparison was chosen to be a vanilla LSTM of which the hidden dimension $h$ was arbitrarily chosen in advance.
	LSTM-MERA (and other architectures if present) was built upon the benchmark.
	
	
	Each time series dataset for training/testing consisted of a set of $N_X$ time series, $\{X^i|i=1,2,\cdots,N_X\}$. Each time series $X^i=\{x^i_t|t\in T^i\}$ is of fixed length  $|T^i|=\text{input steps }+1$ so that all but the last step of $X^i$ are input, while the last step is the one-step-ahead target to be predicted. The dataset $\{X^i\}$ was divided into two subsets, one for testing, and one for training which was further randomly split into a plain training set and a validation set by $80\%:20\%$. Complete details are given in Appendix~\ref{sec_data-process}.
	
	All models were trained by Mathematica 12.0 on its NN infrastructure, Apache MXNet, using an ADAM optimizer with $\beta_1=0.9$, $\beta_2=0.999$, and $\epsilon=10^{-5}$. Learning rate~$=10^{-2}$ and batch size~$=64$ were \emph{a priori} chosen. The NN parameters producing the lowest validation loss during the entire training process were accepted.

	\subsection{Comparison of LSTM-Based Architectures}
	\label{sec_same-complex}
	
	When evaluating the advantage of LSTM-MERA, a controlled comparison is essential to confirm that the architecture of LSTM-MERA is \emph{inherently} better than other architectures, not just because the increase of the number of free and learnable parameters (even though more parameters do \emph{not} necessarily mean more learning power). Here, we studied different architectures  (Fig.~\ref{fig_learning-curves}) that were all built upon the LSTM benchmark and shared nearly the same number of parameters (\# of param.). A ``wider'' LSTM was simply built by increasing $h$. A ``deeper'' LSTM was built by stacking two LSTM units as one unit.  
	In particular, LSTM-MPS and LSTM-MERA were built and compared.
	
	
	\subsubsection{Lorenz system}
	Figure~\ref{fig_lorenz} describes the forecasting task on the \emph{Lorenz system} and shows training results of the LSTM-based models. $\Delta t=0.5$ was chosen for discretization, which was large enough that the resultant time series hardly exhibited any pattern without the help of a phase line [Fig.~\ref{fig_lorenz}, \mbox{input~$1$-$8$}].
	
	In general, non-tensorized LSTM models performed worse than tensorized LSTM models. After the number of free parameters increased from $332$ (benchmark) to $668\pm28$, both the ``wider'' and ``deeper'' LSTMs showed signs of overfitting as the loss and validation learning curves deviated. The ``deeper'' LSTM yielded lower RMSE than the ``wider'' LSTM, confirming the common sense that a deep NN is more suitable of generalization than a wide NN.
	
	Both LSTM-MPS and LSTM-MERA yielded better RMSE and showed no sign of overfitting. However, LSTM-MERA was more powerful, showing an improvement of $\sim25\%$ than LSTM-MPS in RMSE [Fig.~\ref{fig_lorenz}]. The learning curve of LSTM-MERA was also steeper given the same number of epochs during learning. Note that the learning curves of LSTM-MERA and LSTM-MPS were, in general, not as smooth as non-tensorized LSTM models, implying the difficulty of learning nonlinear complexity during which the back-propagated gradient is usually highly irregular.
	
	\subsubsection{Logistic map}
	Figure~\ref{fig_logistic} describes a specific forecasting task on the simplest one-dimensional discrete-time map---the \emph{logistic map}: predicting the target given only a three-step-behind input. Different LSTM models yielded very different results when learning this complex task. After the number of free parameters increased from $35$ (benchmark) to $1142\pm89$, all LSTM models yielded lower RMSE than the benchmark. Interestingly, the ``deeper'' LSTM learned more slowly even than the benchmark and did not reach stable RMSE within $200$ epochs. LSTM-MPS was able to learn quickly than other non-tensorized LSTM models at the beginning but then reached plateaus and struggled to further decrease its RMSE. Only LSTM-MERA was able to reach a much lower RMSE with a remarkable improvement of $\sim94\%$ than LSTM-MPS [Fig.~\ref{fig_logistic}]. Instead of being stable, the learning curve of LSTM-MERA became very spiky after descending below certain values of RMSE which were previously learning barriers to the other LSTM models. We infer that the plateaus of RMSE reached by the other LSTM models might correspond to the infinite numbers of unstable quasi-periodic cycles in the chaotic phases. In fact, as shown in Fig.~\ref{fig_logistic}, Prediction~3, the benchmark fit the target better than LSTM-MERA for this specific example of a quasi-period-2 cycle. However, only did LSTM-MERA learn the full chaotic behavior and thus performed much better on general examples.
	
	The learning process for the logistic map task was indeed very random, and different realizations had yielded very different results. In many realizations, non-tensorized LSTM models did not even learn any patterns at all. By contrast, tensorized LSTM models were more stable in learning.
	
	\subsection{Comparison with Statistical/ML Models}
	\label{sec_statistical}
	We compared LSTM-MERA with more general models including traditional statisical and ML models including RNN-based architectures (Fig.~\ref{fig_gauss}). Especially, we looked into HOT-RNN/LSTM which also claimed to be able to learn chaotic dynamics (e.g. Lorenz system) through tensorization~\cite{hot-rnn_yzay19}. Furthermore, for each model we fed its one-step-ahead predictions back so as to make predictions for the second step, and kept feeding back and so on. In theory, the prediction error at the $t$-th step should increase exponentially with $t$ for chaotic dynamics  [Eq.~(\ref{eq_lyapunov-exp})].
	
	\subsubsection{Gauss iterated map}
	We tested the one-step-ahead learning task on the \emph{Gauss ``cubed'' map} on plain HO-RNN/LSTM~\cite{ho-rnn_sj16} and its tensorized version HOT-RNN/LSTM~\cite{hot-rnn_yzay19}. The explicit ``history'' length was chosen to be equal to our physical Length $L$. The tensor-train ranks were all chosen to be equal to $D_\text{II}$, the same as how we built the MPS structure in LSTM-MPS.
	
	Figure~\ref{fig_gauss} shows that neither HO-RNN nor HO-LSTM performed better than the benchmark, suggesting that introducing explicit non-Markovian dependence (Appendix~\ref{sec_ho-hot}) is \emph{not} helpful for capturing chaotic dynamics where the existing nonlinear complexity is never long-term. HOT-LSTM was better than the benchmark because of its MPS structure, suggesting that tensorization, on the other hand, is \emph{indeed} helpful for forecasting chaos. LSTM-MERA was still the best, with an improvement of $\sim88\%$ over the benchmark. Interestingly, the benchmark itself as a vanilla LSTM was already much better than plain RNN architectures (HO-/HOT-RNN).
	
	The learning task was next tested on fully connected deep NN architectures of depth~$\le8$ (equal to the input steps). At each depth three units were connected in series: a linear layer, a scaled exponential linear unit, and a dropout layer. Hyperparameters were determined by optimal search. The best model having the lowest validation loss consisted of $17950$ free parameters. The task was also tested on GBT of maximum depth~$=8$, as well as on ARMA family (ARMA, ARIMA, FARIMA, and SARIMA) among which the best statistical model selected out by Kalman filtering was $\text{ARMA}(3, 4)$.
	
	With enough parameters, the deep NN became the second best (Fig.~\ref{fig_gauss}). All RMSE increased when making longer-step-ahead predictions, and for the four-step-ahead task the deep NN and LSTM-MERA were the only models that did not overfit and still performed better than the statistical model, ARMA, which made no learning progress but only trivial predictions.
	
	\begin{figure*}[t]
		\centering
		\begin{minipage}[t]{51mm}
			\includegraphics[width=50mm]{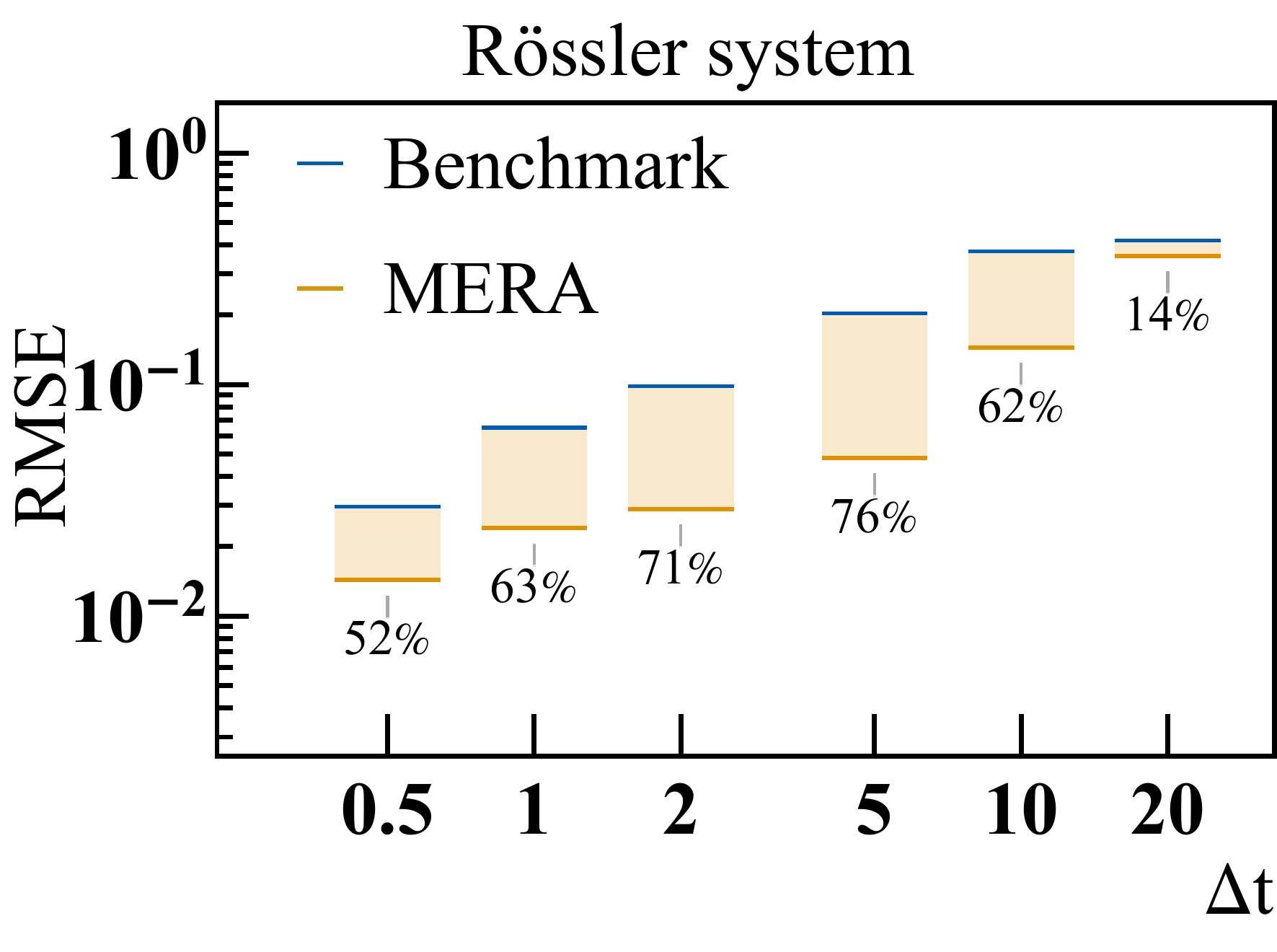}
			\vspace{-2mm}
			\subcaption{\label{fig_rossler}}
		\end{minipage}
		\addtocounter{subfigure}{1}
		\begin{minipage}[t]{51mm}
			\includegraphics[width=50mm]{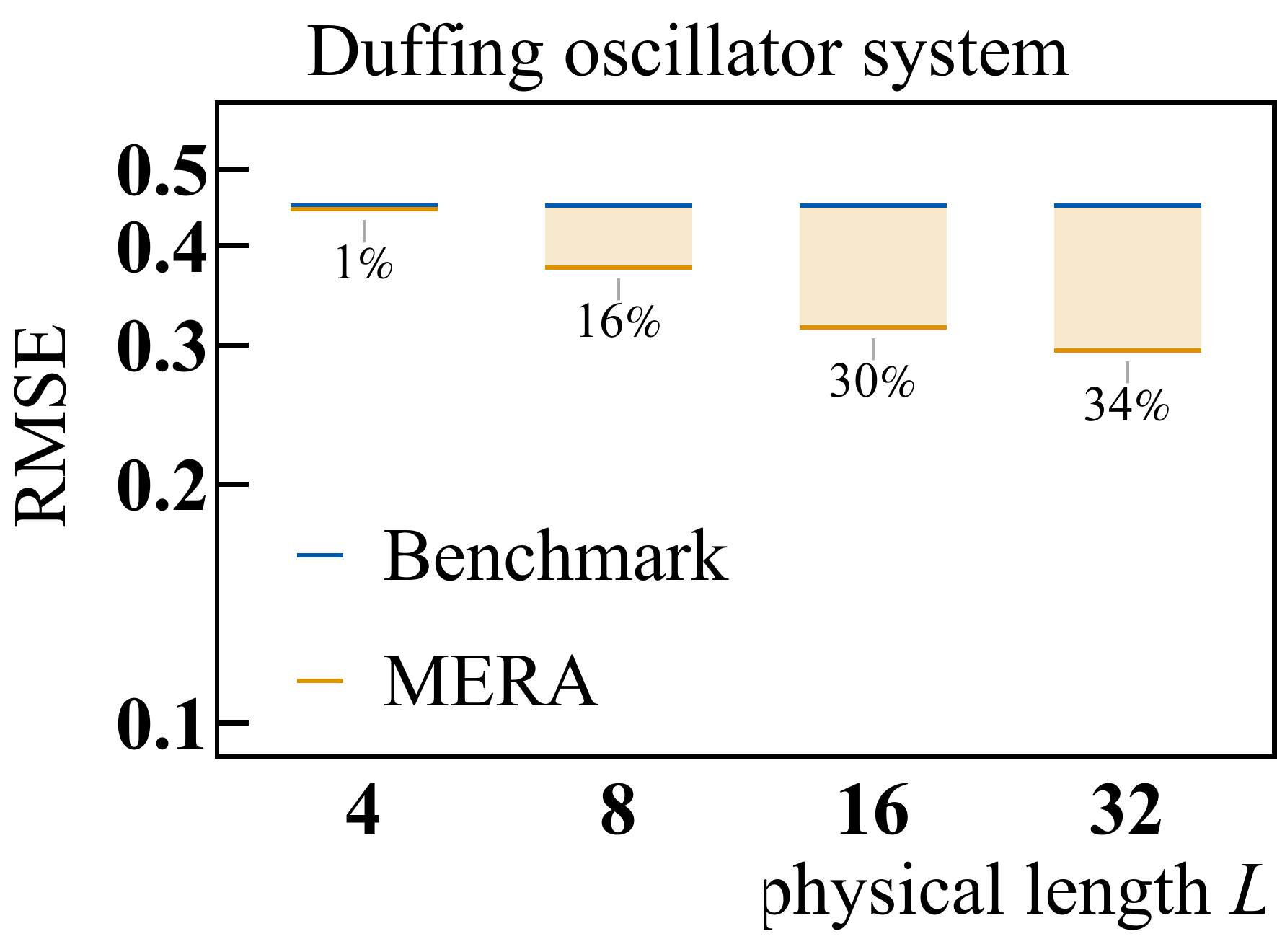}
			\vspace{-2mm}
			\subcaption{\label{fig_duffing}}
		\end{minipage}
		\addtocounter{subfigure}{1}
		\hspace{10mm}
		\begin{minipage}[t]{51mm}
			\includegraphics[width=50mm]{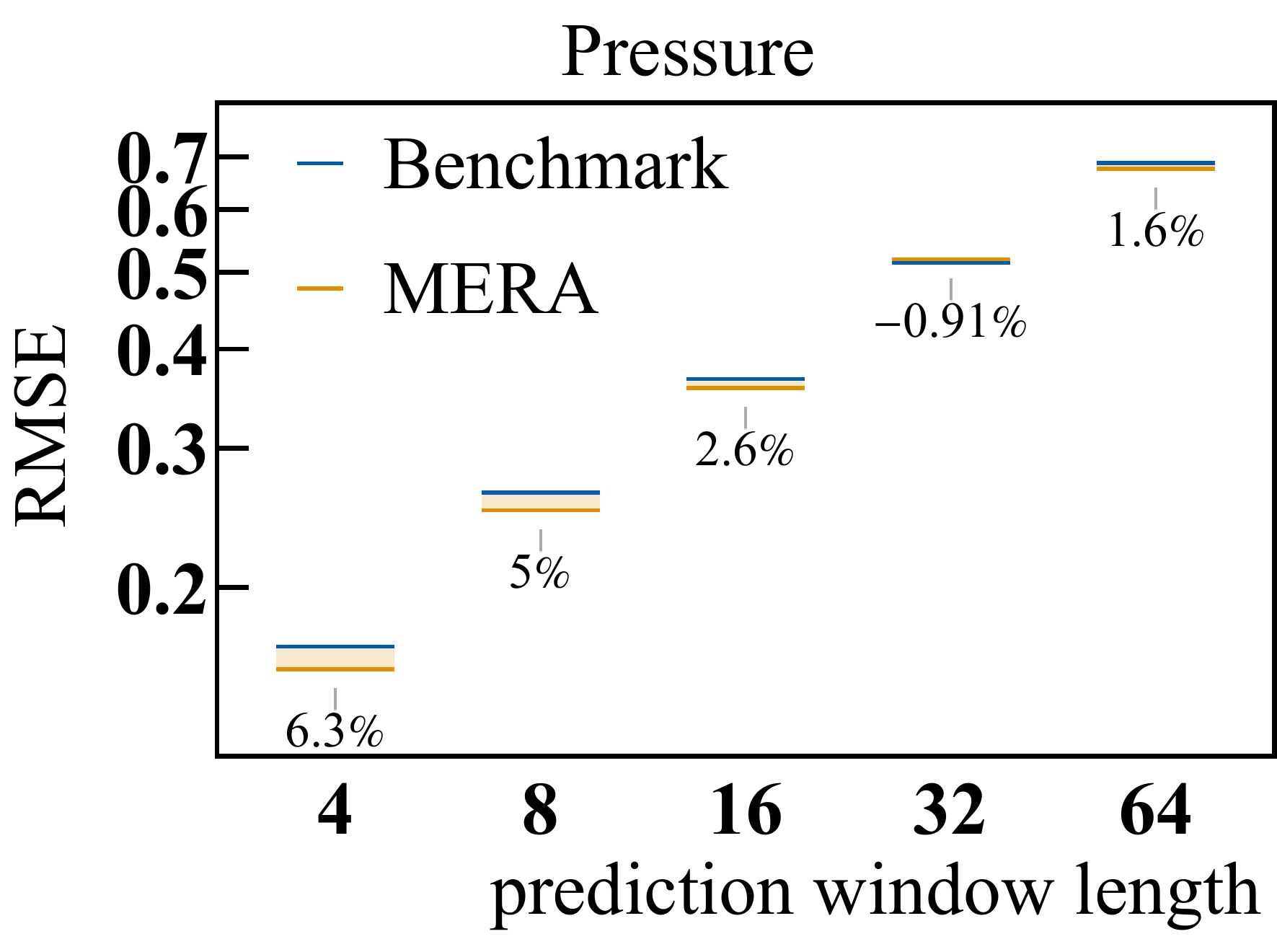}
			\vspace{-2mm}
			\subcaption{\label{fig_kabq}}
		\end{minipage}
		\addtocounter{subfigure}{-4}
		\begin{minipage}[t]{51mm}
			\includegraphics[width=50mm]{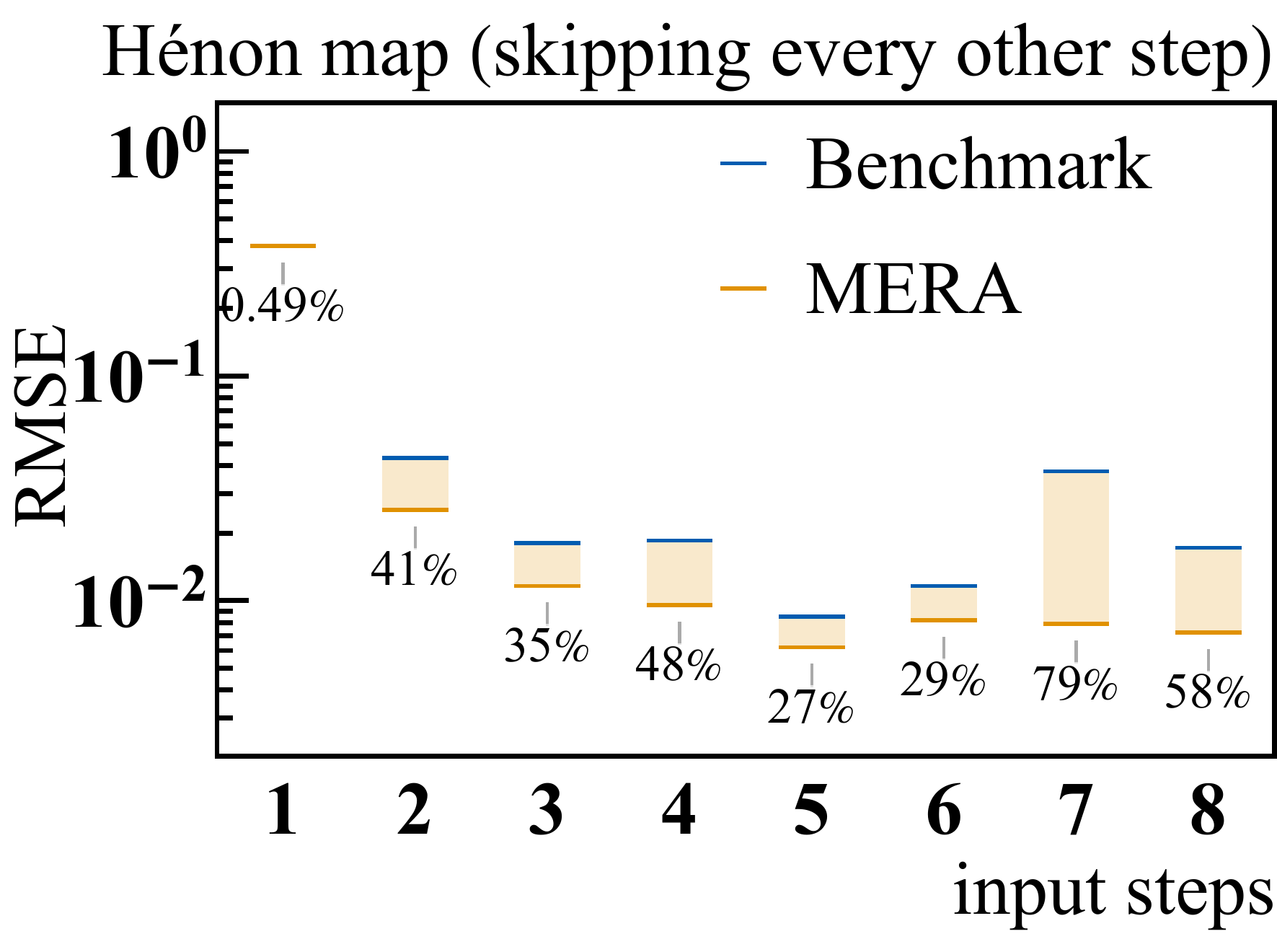}
			\vspace{-2mm}
			\subcaption{\label{fig_henon}}
		\end{minipage}
		\addtocounter{subfigure}{1}
		\begin{minipage}[t]{51mm}
			\includegraphics[width=50mm]{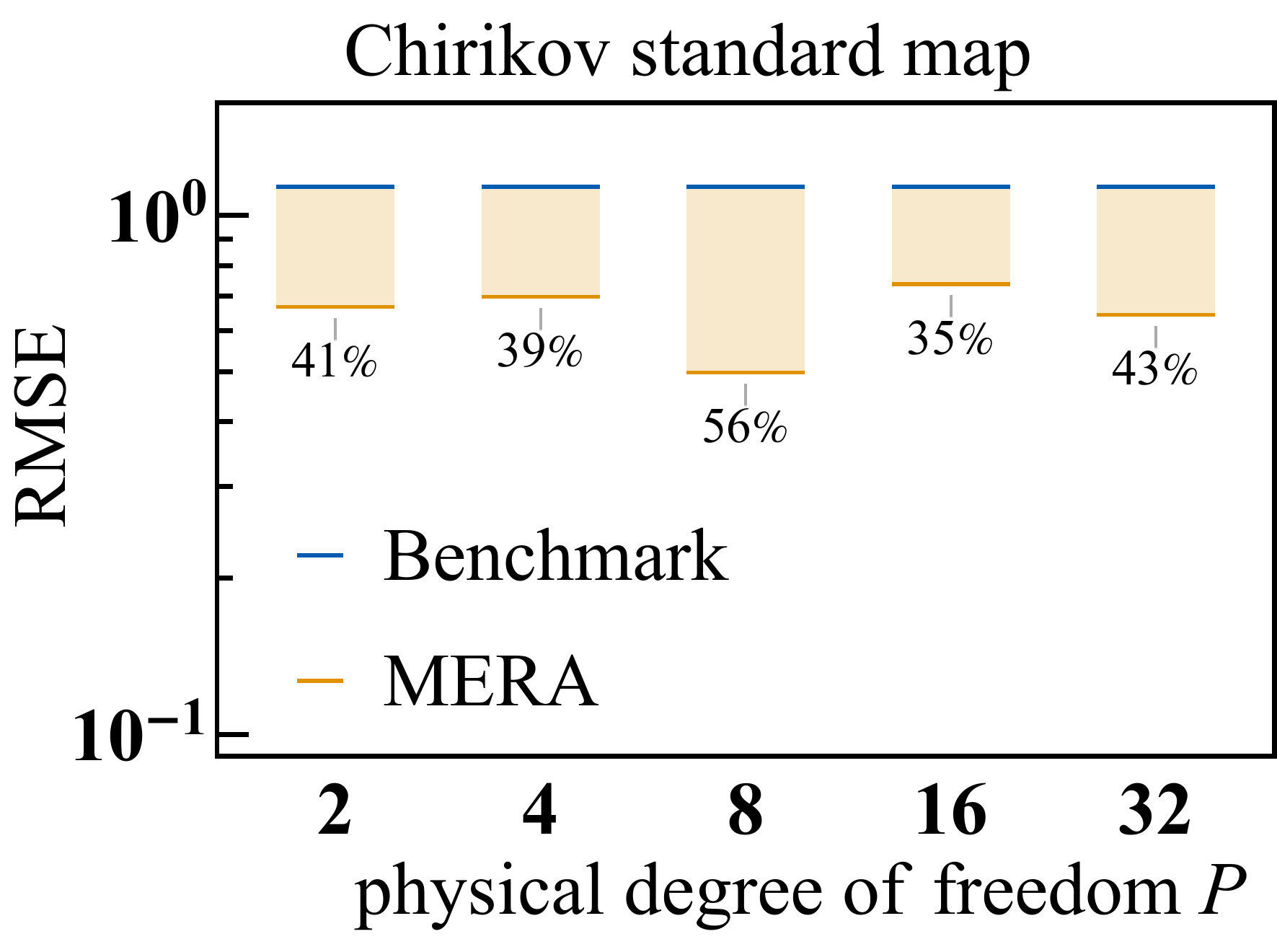}
			\vspace{-2mm}
			\subcaption{\label{fig_chirikov}}
		\end{minipage}
		\addtocounter{subfigure}{1}
		\hspace{10mm}
		\begin{minipage}[t]{51mm}
			\includegraphics[width=50mm]{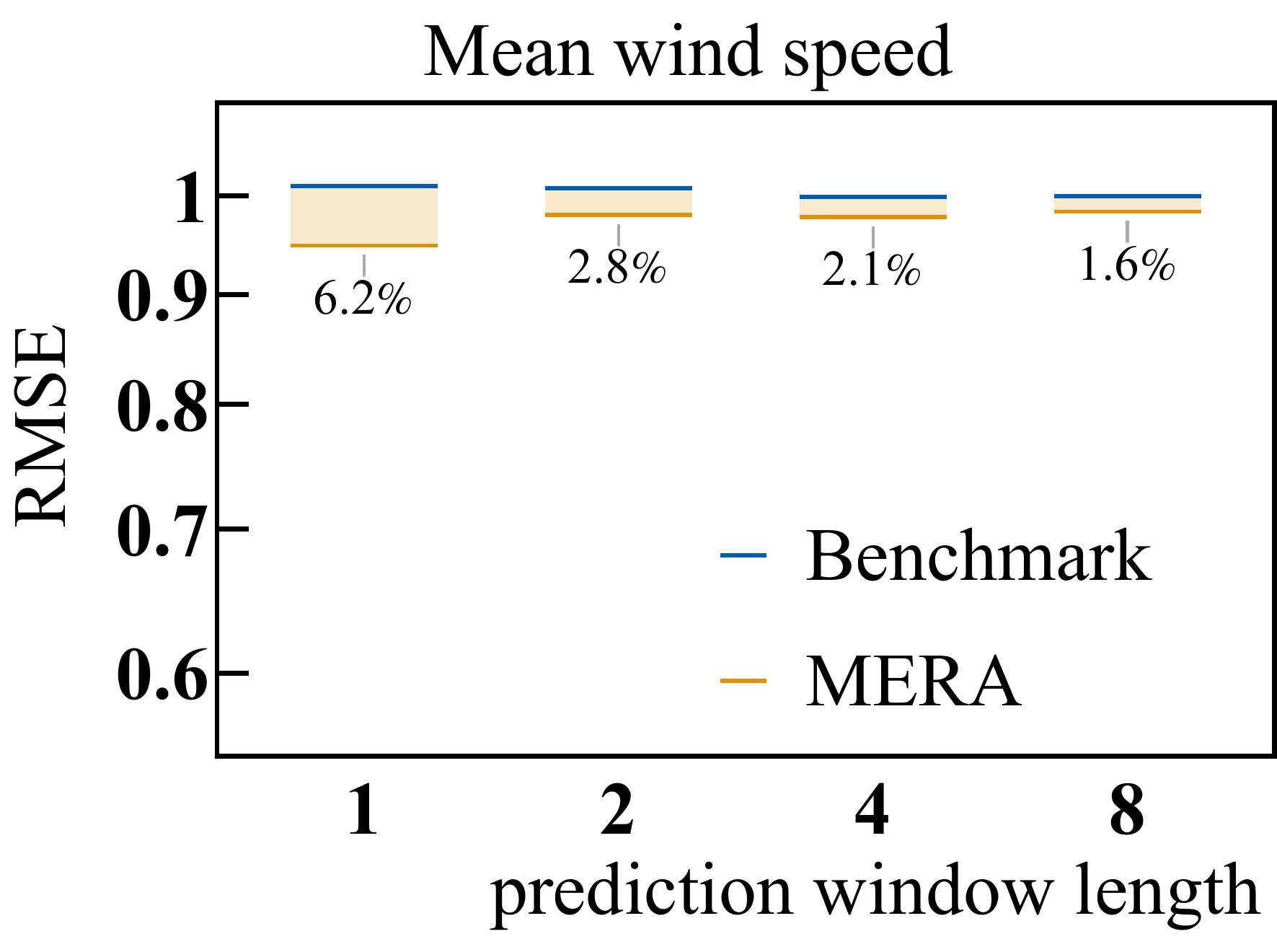}
			\vspace{-2mm}
			\subcaption{\label{fig_kbos}}
		\end{minipage}
		\vspace{-2mm}
		\caption{\label{fig_result1}Generalization and parameter dependence of LSTM-MERA.
			\subref{fig_rossler},~R\"ossler system, another three-dimensional chaotic dynamical system similar to the Lorenz system. Discretization: varying $\Delta t$. Input steps~$=4$, $h=4$, $L=2^4$, $P=2$, and $\{D_\text{I},D_\text{II},\cdots\}=\{P,2,2,4\}$.
			\subref{fig_henon},~One-dimensional, second-order H\'enon map, re-sampled by skipping every other step. $h=4$, $L=2^3$, $P=2$, and $\{D_\text{I},D_\text{II},\cdots\}=\{P,2,4\}$, while the input steps vary.		
			\subref{fig_duffing},~Duffing oscillator system. Discretization: $\Delta t=10.0$. Input steps~$=8$, $h=4$, $P=4$, and $\{D_\text{I},D_\text{II},\cdots\}=\{P,3,3,\cdots\}$ of which the length varies with $L$.
			\subref{fig_chirikov},~Chirikov standard map. Input steps~$=2$, $h=2$, $L=2^3$, and $\{D_\text{I},D_\text{II},\cdots\}=\{P,4,4\}$ where $P$ varies. 			
			\subref{fig_kabq},~Pressure, sampled every eight minutes. Input steps~$=16$, $h=4$, $L=2^4$, $P=4$, $\{D_\text{I},D_\text{II},\cdots\}=\{P,2,2,4\}$, and training : validation : text $=6400:1600$ $:\left({\gtrsim13800}\right)$.
			\subref{fig_kbos},~Mean wind speed, daily sampled. Input steps~$=16$, $h=4$, $L=2^3$, $P=4$, $\{D_\text{I},D_\text{II},\cdots\} =\{P,2,2\}$, and training : validation : text  $=128:32:\left({\gtrsim7000}\right)$.
		}
	\end{figure*}
	
	\subsection{Comparison with LSTM-MERA Alternatives}
	\label{sec_alternative}
	Here we tested the ability of LSTM-MERA for learning short-term nonlinear complexity by changing its NN topology (Fig.~\ref{fig_thomas}). We expected to see that, to achieve the best performance, our tensorization (dashed rectangle in Fig.~\ref{fig_netw}) should indeed act on the state propagation path $c_t\to s_t$, not on $s_{t-1}\to s_t$ or $c_{t-1}\to c_t$.
	
	\subsubsection{Thomas' cyclically symmetric system}
	We investigated different LSTM-MERA alternatives on \emph{Thomas' cyclically symmetric system} (Fig.~\ref{fig_thomas}) in order to see if the short-term complexity could still be efficiently learned. The embedded layers, besides located at Site~A (the proper NN topology of LSTM-MERA), were also located alternatively at Site~B,~C~or~D for comparison. The benchmark was a vanilla LSTM with no embedded layers.
	
	As expected, the lowest RMSE was produced by the proper LSTM-MERA but not its alternatives (Fig.~\ref{fig_thomas}). The improvement of the proper LSTM-MERA over the benchmark was $\sim 60\%$. Interestingly, two alternatives (Site~B, Site~C) performed \emph{barely} better than the benchmark even with more free learnable parameters.
	In fact, in the case that the state propagation path $c_{t-1}\to c_t$ is tensorized (Site~B), the long-term gradient propagation along cell states is interfered and the performance of LSTM deterred; when the path $s_{t-1}\to s_t$ is tensorized (Site~C), the improvement is the same as just on a plain RNN and thus also limited.
	Hence, proper LSTM-MERA NN topology is critical for improving the performance of learning short-term complexity.
	
	\subsection{Generalization and Parameter Dependence of LSTM-MERA}	
	The inherent advantage of LSTM-MERA and its ability to learn chaos have been shown. 
	Hereafter investigated are its parameter dependence as well as ability of generalization (Fig.~\ref{fig_result1}). Each following model (benchmark versus LSTM-MERA) was sufficiently trained through the same number of epochs so that it could reach the lowest stable RMSE. In-between check points were chosen during training where models were \emph{a posteriori} tested on the test data to confirm that an RMSE minimum had eventually been reached. 
	
	\subsubsection{R\"ossler system}
	In theory, a chaotic time series of larger $\Delta t$ should be harder to learn [Eq.~(\ref{eq_lyapunov-exp})]. This is confirmed in Fig.~\ref{fig_rossler} where larger $\Delta t$ corresponds to larger RMSE for both models. The most improvement of LSTM-MERA over the benchmark was $\sim 76\%$, observed at $\Delta t = 5$. The improvement was less when $\Delta t$ increased, possibly because the time series became too random to preserve any feasible pattern even for LSTM-MERA. The improvement was also less when $\Delta t$ was small, as the time series was smooth enough and the first-order (linear) time-dependence predominated which a vanilla LSTM could also learn.
	
	\subsubsection{H\'enon map}
	In view of the fact that the time-dependence is second-order [Fig.~\ref{fig_henon}], there was no explicit and exact dependence between the input and target in the time series dataset. Different input steps were chosen for comparison.
	When input steps~$=1$, there was no sufficient information to be learned other than a linear dependence between the input and target, and thus both the benchmark and LSTM-MERA performed the same [Fig.~\ref{fig_henon}]. When input steps~$>1$, however, the time-dependence could be learned implicitly and ``bidirectionally'' given enough history in length. LSTM-MERA constantly exhibited an average improvement of $~45.3\%$,  the fluctuation of which was mostly due to the learning instability of not LSTM-MERA but the benchmark.

	\subsubsection{Duffing oscillator system}
	From Fig.~\ref{fig_duffing} it was clearly observed that larger $L$ yielded better RMSE. The improvement related to $L$ was significant. This result is not unexpected, since $L$ determines the depth of the MERA structure, the larger which the better the ability of generalization should be.
	
	\subsubsection{Chirikov standard map}
	As Fig.~\ref{fig_chirikov} shows, by choosing different $P$, the most improvement of LSTM-MERA over the benchmark was $\sim56\%$, observed at $P=8$. In general, there was no strong dependence on $P$.
	
	\subsubsection{Real-world data: weather forecasting}
	The advantage of LSTM-MERA was also tested on real-world weather forecasting tasks [Figs.~\ref{fig_kabq}~and~\ref{fig_kbos}] .
	Unlike for synthetic time series, here we removed the first-layer translational symmetry [Eq.~(\ref{eq_trans})] previously imposed on LSTM-MERA so that presumed non-stationarity in real-world time series could be better addressed. To make practical multi-step forecasting, we kept the one-step-ahead prediction architecture of LSTM yet regrouped the original time series by choosing different prediction window lengths (Appendix~\ref{sec_data-process-real}).
	
	The improvement of LSTM-MERA over the benchmark was less significant. The average improvement was $\sim3.0\%$, while the most improvement was $\sim6.3\%$ given that the prediction window length was small, reflecting that LSTM-MERA is better at capturing short-term nonlinear complexity rather than long-term non-Markovianity. Note that, in the second dataset [Fig.~\ref{fig_kbos}], we deliberately used a very small number ($=128$) of training data to test the overfitting resistibility of the models. Interestingly, LSTM-MERA did not generally perform worse than vanilla LSTM even with more parameters, probably due to the deep architecture of LSTM-MERA. 
	
	\section{Discussion and Conclusion}
	Limitation of our model mostly comes from the fact that it is only better than traditional LSTM at capturing short-term nonlinearity but not long-term non-Markovianity, and thus its improvement on long-term tasks such as sequence prediction would be limited.
	That being said, the advantages of tensorizing state propagation in LSTM are evident, including: (1) Tensorization is the most suitable for forecasting of nonlinear chaos since nonlinear terms are treated explicitly and weighted equally by polynomials. (2) Theoretical analysis is conductible since an orthogonal polynomial basis on $k$-Sobolev space is always available. (3) Tensor decomposition techniques (in particular, from quantum physics) are applicable, which in turn can identify chaos from a different perspective, i.e., tensor complexity (tensor ranks, entropies, etc.).
	
	Our tensorized LSTM model not only offers a most general and efficient approach for capturing chaos---as demonstrated by both theoretical analysis and experimental results, showing more-than-ever potential in unraveling real-world time series, but also brings out a fundamental question of how tensor complexity is related to the learnability of chaos. Our conjecture that a tensor complexity of $S_{\alpha}(l)=\Theta(\log l)$ in terms of $\alpha$-R\'enyi entropy~[Eq.~(\ref{eq_ee})] generally performs better than $S_{\alpha}(l)=\Theta(1)$ at chaotic time series forecasting will be further investigated and formalized in the near future.

\nocite{*}
\bibliography{aipsamp}

\newpage
\appendix

	\section{Variants of LSTM-MERA}
	
	\subsection{Translation Symmetry}
	In condensed matter physics, the many-body states studied are usually translational invariant in $L$, which puts additional constraints on their many-body state structures (MPS, MERA, etc.). Inspired by this, a variant of LSTM-MERA can be constructed by imposing such constraints on the MERA structure too, i.e., by forcing the disentanglers belonging to the same level to be equal to each other. For example, on the first level of the MERA structure [Level I, red in Fig.~\ref{fig_notation}], a constraint
	\begin{eqnarray}
	\label{eq_trans}
	[u^{\left(i\right)}_{\text{I}}]^{\alpha\beta}_{\mu\nu}\equiv [u_{\text{I}}]^{\alpha\beta}_{\mu\nu},\quad i=1,2,\cdots,L/2
	\end{eqnarray}
	can be imposed on the weights of the $L/2$ disentanglers. Such a constraint can also be imposed on isometries/inverse isometries as well as higher levels. When testing LSTM-MERA on synthetic time series, we have added such a partial translational symmetry constraint on and only on Level I for the purpose of controlling the number of free learnable parameters in our model.
	
	\subsection{Dilational Symmetry}
	A dilational symmetry constraint is exclusive for MERA, since it has been used in condensed matter physics for representing scaling invariant quantum states. A variant of LSTM-MERA can thus be introduced by imposing the same constraint, i.e., by forcing all disentanglers even from different levels to be equal to each other,
	\begin{eqnarray}
	\label{eq_dilat}
	[u^{\left(i\right)}_{\text{I}}]^{\alpha\beta}_{\mu\nu}\equiv [u_{\text{I}}]^{\alpha\beta}_{\mu\nu}\equiv[u^{\left(j\right)}_{\text{II}}]^{\alpha\beta}_{\mu\nu}\equiv [u_{\text{II}}]^{\alpha\beta}_{\mu\nu}\equiv &\cdots&,\nonumber\\
	i=1,2,\cdots,L/2;\quad j=1,2,\cdots,L/4;\quad&\cdots&,
	\end{eqnarray}
	as well as isometries. This variant of LSTM-MERA may greatly decrease the number of free learnable parameters but may also lose the expressive power.

	\subsection{Normalization/Unitarity}
	Another subtle fact for many-body state structures is that the represented states must be normalized. In fact, normalization layers are also widely used in NN architectures, especially for deep NN where the training may suffer from the vanishing gradient problem. In light of this, we have added normalization layers between different LSTM-MERA layers. No extra freedom has been introduced because the ``norm'' is already a degree of freedom implicitly given by the weights of the disentanglers/isometries.
	
	Similarly, the unitarity of the disentanglers~\cite{mera_v08} is no longer required. The additional degrees of freedom do not affect the essential MERA structure but may significantly speed up our training.
	
	\section{Common LSTM Architectures}

	\subsection{HO- and HOT-RNN/LSTM}
	\label{sec_ho-hot}
	HO-RNN/LSTM~\cite{ho-rnn_sj16} were first introduced to address the problem of explicitly capturing long-term dependence, by changing all gates in LSTM [Eq.~(\ref{eq_lstm-operator})] into
	\begin{eqnarray}
	\label{eq_ho}
	g(x_{t-1},1\oplus s_{t-1}\oplus s_{t-2}\oplus \cdots\oplus s_{t-L};\mathcal{W}).
	\end{eqnarray}
	Since Eq.~(\ref{eq_ho}) only includes linear (first-order polynomial) terms, HOT-RNN/LSTM~\cite{hot-rnn_yzay19} were later introduced to include nonlinear (higher-order polynomial) terms as tensor products of the entire non-Markovian dependence,
	\begin{eqnarray}
	\label{eq_hot}
	g(x_{t-1},\left(1\oplus s_{t-1}\oplus s_{t-2}\oplus \cdots\oplus s_{t-L}\right)^{\otimes P};\mathcal{W}).
	\end{eqnarray}
	The weight tensor $\mathcal{W}$ can be further approximated by the tensor-train (i.e., MPS) technique~\cite{hot-rnn_yzay19}.
	
	Note that $L$ in Eqs.~(\ref{eq_ho})~and~(\ref{eq_hot}) is not a virtual dimension but the true time lag. Therefore, to increase the tensorization complexity one has to explicitly increase the time lag dependence. As a comparison, in LSTM-MERA, $L$ is an artificial dimension that can be freely adjusted to reflect the true short-term nonlinear complexity.
	
	\section{Preparation of Time Series Datasets}
	\label{sec_data-process}
	
	\subsection{Discrete-Time Maps}
	\label{sec_data-process-discrete}
	Each time series dataset for discrete-time maps was constructed as follows: first, two arrays were produced by the discrete-time map, one with initial conditions (training) and the other one with initial conditions (testing); next, for both arrays, a time window of fixed length ($\text{input steps }+1$) moved from the beginning to the end, step by step, and thus extracted a sub-array of length ($\text{input steps }+1$) at each step; each extracted sub-array was a time series. All time series (from both the training array and the testing array) made up the entire time series dataset and served for training and testing, respectively.
	The initial conditions for training and testing were made different \emph{on purpose} in order to test the generalization ability of the models, yet they were chosen to belong to the same chaotic regime so that the generality of their subsequent chaotic dynamics was always guaranteed by ergodicity. We investigated 4 different dynamical systems:
	\begin{align}
	\text{\textbf{Logistic map:}} &\qquad x_{n+1} = rx_n(1-x_n); \nonumber \\
	\text{\textbf{Gauss iterated map:}} &\qquad x_{n+1}=\exp\left(-\alpha x_{n}^2\right)+\beta; \nonumber \\
	\text{\textbf{H\'enon map:}} &\qquad x_{n+1}=1-a x^2_{n}+b x_{n-1}; \nonumber \\
	\text{\textbf{Chirikov standard map:}} &\qquad p_{n+1}=\left(p_{n}+ K \sin\theta_n\right)\text{ mod }2\pi, \nonumber \\
	&\qquad \theta_{n+1}=\left(\theta_{n}+p_{n+1}\right)\text{ mod }2\pi.
	\nonumber
	\end{align}
	Details of above systems are listed in Table-\ref{discrete_details}.

	\begin{table}[!ht]
		\begin{center}
			\begin{tabular}{|c|c|c|c|c|}
				\hline
				& Logistic & Gauss & H\'enon & Chirikov \\
				\hline
				Dimension & 1 & 1 & 1 & 2 \\
				\hline
				Parameters & $r=4$ & $\alpha=6.2$ & $a=1.4$ & $K=2.0$ \\
				&  & $\beta=-0.55$ & $b=0.3$ & \\
				\hline
				Initial condition & $x_0=0.61$ & $x_0=0.31$ & $x_0=0.2$ & $p_0=0.777$ \\
				(training) &  &  & $x_1=0.3$ & $\theta_0=0.555$ \\
				\hline
				Initial condition & $x_0=0.11$ & $x_0=0.91$ & $x_0=0.5$ & $p_0=0.333$ \\
				(testing) &  &  & $x_1=0.6$ & $\theta_0=0.999$ \\
				\hline
				$\lambda_1$ & $\ln{2}$ & $0.37$ & $0.42$ & $0.45$ \\
				$\lambda_2$ & -- & -- & $-1.62$ & $-0.45$ \\
				\hline
			\end{tabular}
		\end{center}
		\caption{Implementation details of 4 continuous dynamical systems in chaotic phases. $\lambda_{1,2}$ are Lyapunov exponents.}
		\label{discrete_details}
	\end{table}

	\subsection{Continuous-Time Dynamical Systems}
	\label{sec_data-process-continuous}
	
	Each time series dataset for continuous-time dynamical systems was constructed differently than in Section~\ref{sec_data-process-discrete}: only one array was produced by discretizing the dynamical system by $\Delta t$ given the initial conditions; then the array was \emph{standardized}; a time window still moved from the beginning to the end and extracted a sub-array of length ($\text{input steps }+1$) at each step; each extracted sub-array was a time series. All time series made up the entire time series dataset which was then randomly divided into two subsets, one for testing and one for training. Four different dynamics are investigated:
	\begin{align*}
	&\text{\textbf{Lorentz system:}} 
	\nonumber \\
	& \qquad 
	\frac{dx}{dt}=\sigma\left(y-x\right),
	\;\frac{dy}{dt}=x\left(\rho-z\right)-y,
	\;\frac{dz}{dt}=xy-\beta z;
	\nonumber\\
	&\text{\textbf{Thomas' cyclically symmetric system:}}
	\nonumber \\
	& \qquad 
	\frac{dx}{dt}=\sin y -bx,
	\;\frac{dy}{dt}=\sin z -by,
	\;\quad\frac{dz}{dt}=\sin x -bz;
	\nonumber \\
	&\text{\textbf{R\"ossler system:}}
	\nonumber \\
	& \qquad 
	\frac{dx}{dt}=-y-z,
	\;\quad\frac{dy}{dt}=x+ay,
	\;\;\;\qquad\frac{dz}{dt}=b+z\left(x-c\right);
	\nonumber\\
	&\text{\textbf{Duffing oscillator system:}}
	\nonumber \\
	& \qquad 
	\frac{d^2x}{{dt}^2}+\delta\frac{dx}{dt}+\alpha x+\beta x^3 =\gamma \cos\left(\omega t\right).
	\nonumber
	\end{align*}
	Details of above systems are listed in Table-\ref{continuous_details}.
	
	\begin{table}
		\begin{center}
			\begin{tabular}{|c|c|c|c|c|}
				\hline
				& Lorentz & Thomas & R\"ossler & Duffing \\
				\hline
				Dimension & 3 & 3 & 3 & 1 \\
				\hline
				&  &  &  & $\alpha=1.0$ \\
				& $\rho=28$ &  & $a=0.1$ & $\beta=5.0$ \\
				Parameters & $\sigma=10.0$ & $b=0.1$ & $b=0.1$ & $\delta=0.02$ \\
				& $\beta=8/3$ &  & $c=14$ & $\gamma=8.0$\\ 
				&  &  &  & $\omega=0.5$ \\
				\hline
				& $x_0=0$ & $x_0=0$ & $x_0=0$ & $x_0=0$ \\
				Initial condition & $y_0=1$ & $y_0=1$ & $y_0=1$ & $\dot{x}_0=1$ \\
				& $z_0=0$ & $z_0=0$ & $z_0=0$ &  \\
				\hline
				$\lambda_1$ & $0.91$ & $0.06$ & $0.07$ & $0.01$\\
				$\lambda_2$ & $0$ & $0$ & $0$ & $0$ \\
				$\lambda_3$ & $-14.57$ & $-0.36$ & $-11.7$ & $-0.03$ \\
				\hline
				$T_{\max}$ & $[0, 2500]$ & $[0, 5000]$ & $[0, 100000]$ & $[0, 50000]$ \\
				\hline
			\end{tabular}
		\end{center}
		\vspace{-0.1cm}
		\caption{Implementation details of 4 continuous dynamical systems in chaotic phases. $T_{\max}$ is the maximum solution range, and $\lambda_{1,2,3}$ are Lyapunov exponents.}
		\label{continuous_details}
	\end{table}
		
	\subsection{Real-World Time Series: Weather}
	\label{sec_data-process-real}
	The data were retrieved by Mathematica's \emph{WeatherData} function\footnote{\url{https://reference.wolfram.com/language/note/WeatherDataSourceInformation.html}} (Fig.~\ref{fig_real}). And detailed information about the data has been provided in Table-\ref{data_info} Missing data points in the raw time series were reconstructed by linear interpolation. The raw time series was then regrouped by choosing different \emph{prediction window} length: for example, prediction window length~$=4$ means that every four consecutive steps in the time series are regrouped together as a one-step four-dimensional vector. Then, the dataset was constructed from the regrouped time series the same way as in Section~\ref{sec_data-process-continuous} by a moving window on it after standardization.
	\newpage
	\begin{figure}
		\centering 
		{\includegraphics[scale=0.4]{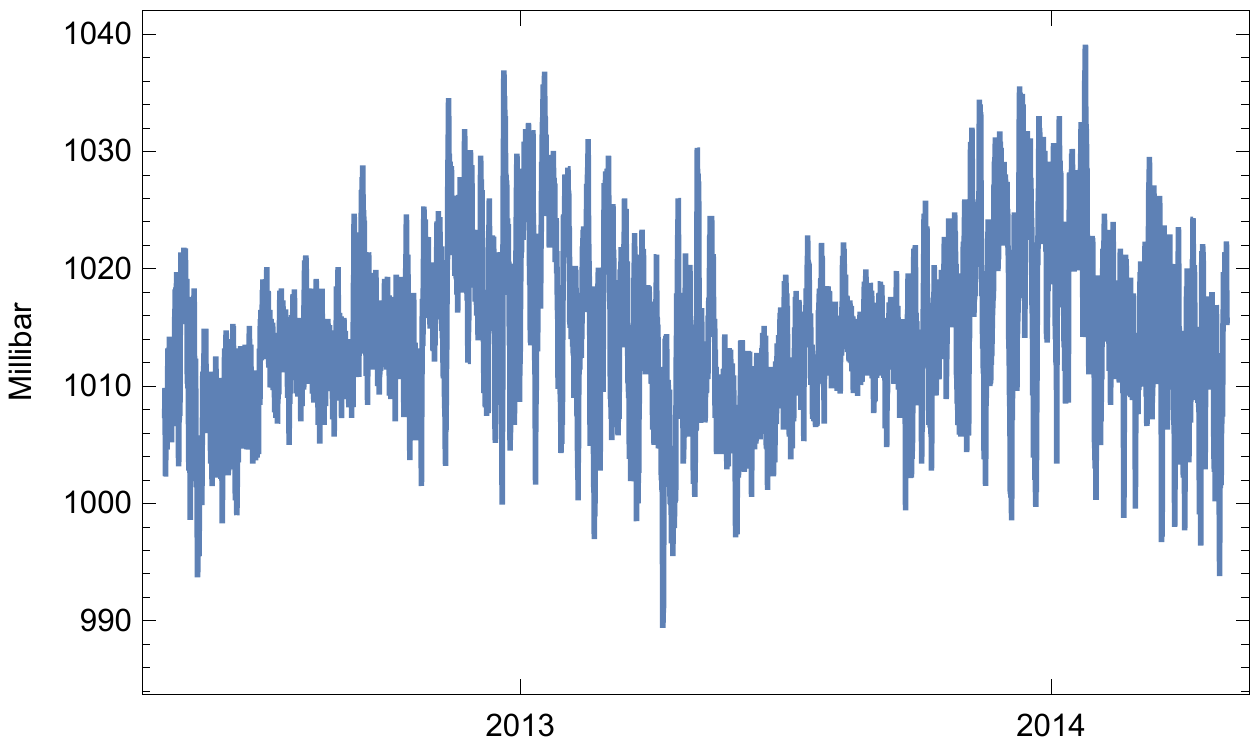}
			\subcaption{\label{fig_pressure}}}
		{\includegraphics[scale=0.4]{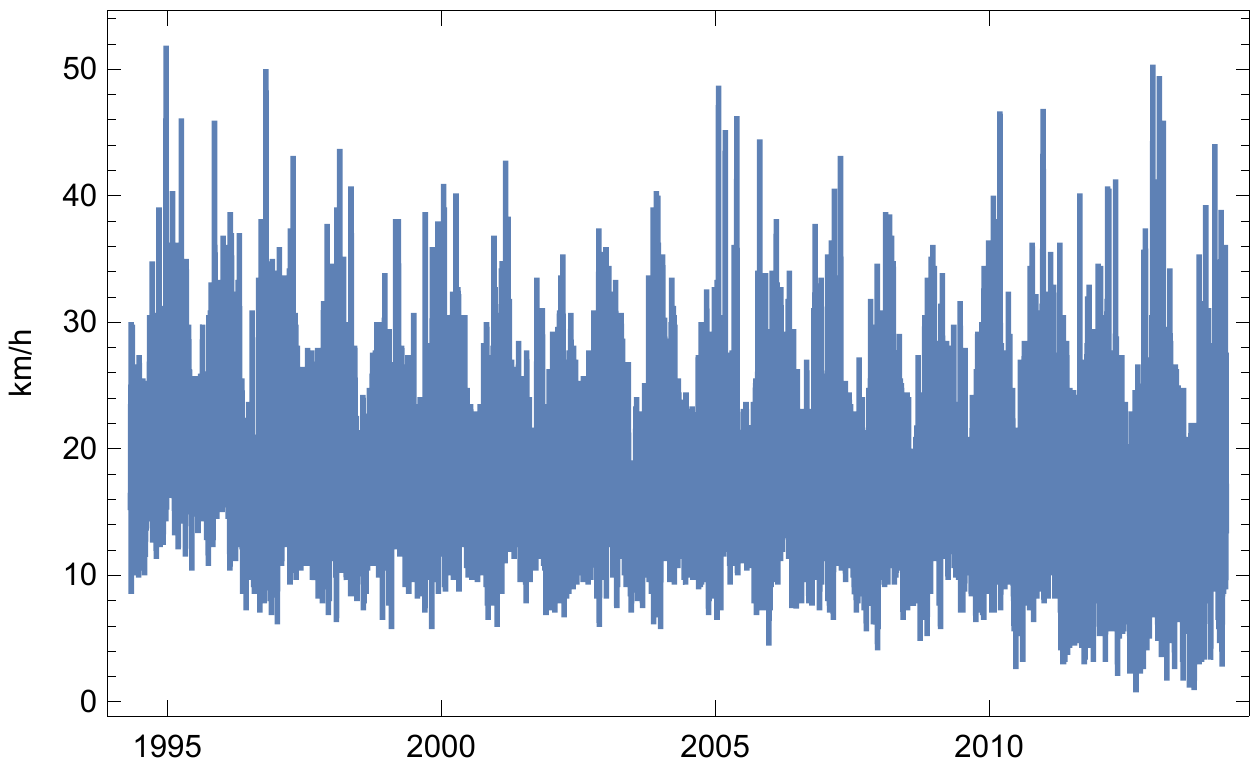}
			\subcaption{\label{fig_wind}}}
		\vspace{-0.3cm}
		\caption{\label{fig_real}~Weather time series. \subref{fig_pressure}~Pressure (every eight minutes) before standardization. \subref{fig_wind}~Mean wind speed (daily) before standardization.}
	\end{figure}
	\begin{table}
		\centering
		\begin{tabular}{|c|c|c|}
			\hline
			& Pressure & Mean wind speed \\
			\hline
			Location & ICAO:KABQ & ICAO:KBOS \\
			\hline
			Span & 05/01/2012 - 05/01/2014 & 05/01/1994 - 05/01/2014\\
			\hline
			Frequency & 8 min & 1 day\\
			\hline 
			Total length & 22426 & 7299\\
			\hline
		\end{tabular}
		\vspace{-0.2cm}
		\caption{Information details of weather datasets used in the main article.}
		\label{data_info}
	\end{table}

\end{document}